\setlist[1]{itemsep=-1mm} 
\newtheoremstyle{bthm}{\baselineskip}{\baselineskip}{\slshape}{}{\bfseries}{}{ }{}
\newtheoremstyle{bex}{\baselineskip}{\baselineskip}{}{}{\sffamily}{:}{\newline }{}
\theoremstyle{bthm}
\newtheorem{thm}{Theorem}[section]
\newtheorem{cor}[thm]{Corollary}
\newtheorem{prop}[thm]{Proposition}
\newtheorem{obs}[thm]{Observation}
\newtheorem{con}[thm]{Conjecture}
\theoremstyle{bex}
\begin{document}
\begin{titlepage}
\title{On indicated coloring of lexicographic product of graphs}
\author{P. Francis$^1$, S. Francis Raj$^{2}$, M. Gokulnath$^{3}$}
\date{{\footnotesize$^{1}$Department of Computer Science, Indian Institute of Technology, Palakkad-678557, India.\\ $^{2,3}$Department of Mathematics, Pondicherry University, Puducherry-605014, India.}\\
{\footnotesize$^1$ pfrancis@iitpkd.ac.in, $^{2}$ francisraj\_s@yahoo.com\, $^{3}$ gokulnath.math@gmail.com\ }}
\maketitle
\renewcommand{\baselinestretch}{1.3}\normalsize

\begin{abstract}
Indicated coloring is a graph coloring game in which two players collectively color the vertices of a graph in the following way. In each round the first player (Ann) selects a vertex, and then  the second player (Ben) colors it properly, using a fixed set of colors. The goal of Ann is to achieve a proper coloring of the whole graph, while Ben is trying to prevent the realization of this project. The smallest number of colors necessary for Ann to win the game on a graph $G$ (regardless of Ben's strategy) is called the indicated chromatic number of $G$, denoted by $\chi_i(G)$. 
In this paper, we have shown that for any graphs $G$ and $H$, $G[H]$ is $k$-indicated colorable for all $k\geq\mathrm{col}(G)\mathrm{col}(H)$. Also, we have shown that for any graph $G$ and for some classes of graphs $H$ with $\chi(H)=\chi_i(H)=\ell$, $G[H]$ is $k$-indicated colorable if and only if $G[K_\ell]$ is $k$-indicated colorable. As a consequence of this result we have shown that if $G\in\mathcal{G}=\Big\{$Chordal graphs, Cographs, Complement of bipartite graphs, $\{P_5,C_4\}$-free graphs, connected $\{P_6,C_5,\overline{P_5}, K_{1,3}\}$-free graphs which contain an induced $C_6$, Complete multipartite graphs$\Big\}$ and $H\in\mathcal{F}=\Big\{$Bipartite graphs, Chordal graphs, Cographs, $\{P_5,K_3\}$-free graphs, $\{P_5,Paw\}$-free graphs, Complement of bipartite graphs, $\{P_5,K_4,Kite,Bull\}$-free graphs, connected $\{P_6,C_5,\overline{P_5},K_{1,3}\}$-free graphs which contain an induced $C_6$, $\mathbb{K}[C_5](m_1, m_2 ,\ldots,m_5)$, $\{P_5,C_4\}$-free graphs, connected $\{P_5,\overline{P_2\cup P_3},\overline{P_5},Dart\}$-free graphs which contain an induced $C_5\Big\}$, then $G[H]$ is $k$-indicated colorable for every $k\geq \chi(G[H])$. This serves as a partial answer to one of the questions raised by A. Grzesik in \cite{and}. In addition, if $G$ is a Bipartite graph or a $\{P_5,K_3\}$-free graph (or) a $\{P_5,Paw\}$-free graph and $H\in\mathcal{F}$, then we have shown that $\chi_i(G[H])=\chi(G[H])$. 
\end{abstract}
\noindent
\textbf{Key Words:} Game chromatic number, Indicated chromatic number, Lexicographic product of graphs. \\
\textbf{2000 AMS Subject Classification:} 05C15
\section{Introduction}\label{intro}

All graphs considered in this paper are simple, finite and undirected. For any positive integer $k$, a proper $k$-coloring of a graph $G$ is a mapping $c$ : $V(G)\rightarrow\{1,2,\ldots,k\}$ such that for any two adjacent vertices $u,v\in V(G)$, $c(u)\neq c(v)$. 
A graph is said to be $k$-colorable if it admits a proper $k$-coloring. 
The chromatic number $\chi(G)$ of a graph $G$ is the smallest $k$ such that $G$ is $k$-colorable. 
In this paper, $P_n, C_n$ and $ K_n$ respectively denotes the path, the cycle and the complete graph on $n$ vertices.
For any graph $G$, let $\overline{G}$ denotes the complement of $G$.

Let us recall some of the definitions which are required for this paper. 

Let $\mathcal{F}$ be a family of graphs. 
We say that a graph $G$ is $\mathcal{F}$-free if it contains no induced subgraph which is isomorphic to a graph in $\mathcal{F}$.
Next, the coloring number of a graph $G$ (see \cite{jens}), denoted by $\mathrm{col}(G)$, is defined
as the smallest number $d$ such that for some linear ordering $<$ of the vertex set, the ``back degree'' $|\{y:y<x,xy\in E(G)\}|$ of every vertex $x$ is strictly less than $d$. In other words, if the vertices of $G$ are $x_1, x_2,\ldots, x_n$, then 
\begin{equation*}
\mathrm{col}(G)=1+\min\limits_p{\max\limits_i{\{d(x_{p(i)}, G_{p(i)})\}}}, 
\end{equation*}
where the minimum is taken over all permutations $p$ of $\{1,2,\ldots,n\}$ and $G_{p(i)}$ is the subgraph of $G$ induced by $x_{p(1)}, x_{p(2)},\ldots, x_{p(i)}$, and where $d(x, H)$ denotes the degree of a vertex $x$ in a graph $H$. 
It is clear that $\mathrm{col}(G)\leq\Delta(G)+1$. 
Equivalently, the coloring number can be defined as $\mathrm{col}(G)=1+\max\limits_{H\subseteq G}\delta(H)$, where $H\subseteq G$ means $H$ is a subgraph of $G$.

The lexicographic product of two graphs $G$ and $H$, denoted by $G[H]$, is a graph whose vertex set $V(G)\times V(H) =\{(x, y): x\in V(G)~\mathrm{and}~ y\in V(H)\}$ and two vertices $(x_1, y_1)$ and $(x_2,y_2)$ of $G[H]$ are adjacent if and only if either $x_1 = x_2$ and $y_1y_2 \in E(H)$, or $x_1x_2\in E(G)$.
For each $u\in V(G)$, $\langle  u\times V(H)\rangle$  is isomorphic to $H$ and it is  denoted by $H_u$  and for each $v\in V(H)$, $\langle V(G)\times v\rangle$ is isomorphic to $G$ and it is denoted by $G_v$. 

Let $G$ be a graph on $n$ vertices $v_1,v_2,\ldots,v_n$, and let $H_1,H_2,\ldots,H_n$ be $n$ vertex-disjoint graphs. 
An expansion $G(H_1,H_2,\ldots,H_n)$ of $G$ (see \cite{cho}) is the graph obtained from $G$ by\\
(i)  replacing each $v_i$ of $G$ by $H_i$, $i=1,2,\ldots,n$, and\\
(ii) by joining every vertex in $H_i$ with every vertex in $ H_j$ whenever $v_i$ and $v_j$ are adjacent in $G$.

For $i\in\{1,2,\ldots,n\}$, if $H_i\cong K_{m_i}$, then $G(H_1,H_2,\ldots,H_n)$ is said to be a complete expansion of $G$ and is denoted by $\mathbb{K}[G](m_1,m_2,\ldots,m_n)$ or $\mathbb{K}[G]$. For $i\in\{1,2,\ldots,n\}$, if  $H_i\cong \overline{K_{m_i}}$, then $G(H_1,H_2,\ldots,H_n)$ is said to be an independent expansion of $G$ and is denoted by $\mathbb{I}[G](m_1,m_2,\ldots,m_n)$ or $\mathbb{I}[G]$. It can be noted that, if $m_1=m_2=\ldots=m_n=m$, then  $\mathbb{K}[G](m_1, m_2,$ $ \ldots, m_n )\cong G[K_m]$ and $\mathbb{I}[G](m_1,m_2,\ldots,m_n)\cong G[\overline{K_m}]$.

A game coloring of a graph is a coloring of the vertices in which two players Ann (first player) and Ben are alternatively coloring the vertices of the graph $G$ properly by using a fixed set of colors $C$. 
The first player Ann is aiming to get a proper coloring of the whole graph, where as the second player Ben is trying to prevent the realization of this project. 
If all the vertices are colored then Ann wins the game, otherwise Ben wins (that is, at that stage of the game there appears a block vertex. 
A $block$ vertex means an uncolored vertex which has all colors from $C$ on its neighbors). 
The minimum number of colors required for Ann to win the game on a graph $G$ irrespective of Ben's strategy is called the game chromatic number of the graph $G$ and it is denoted by $\chi_g(G)$. 
There has been a lot of papers on  game coloring. See for instance, \cite{gua,sek,wu,zhu}. 
The idea of indicated coloring was introduced by A. Grzesik  in \cite{and} as a slight variant of the game coloring in the following way: in each round the first player Ann selects a vertex and then the second player Ben colors it properly, using a fixed set of colors. 
The aim of Ann as in game coloring is to achieve a proper coloring of the whole graph $G$, while Ben tries to ``block'' some vertex.
The smallest number of colors required for Ann to win the game on a graph $G$ is known as the indicated chromatic number of $G$ and is denoted by $\chi_i(G)$.
Clearly from the definition we see that $\omega(G)\leq\chi(G)\leq\chi_i(G)\leq\Delta(G)+1$. For a graph $G$, if Ann has a winning strategy using $k$ colors, then we say that $G$ is $k$-indicated colorable.


A. Grzesik in \cite{and} has shown that a graph $G$ being $k$-indicated colorable need not naturally guarantee that $G$ is $(k+1)$-indicated colorable. Thus A. Grzesik raised the following question: For a graph $G$, if $G$ is $k$-indicated colorable, will it imply that $G$ is also $(k+1)$-indicated colorable? The question still remains open. One can equivalently characterize all graphs $G$ which are $k$-indicated colorable for all $k\geq\chi_i(G)$.
There has been already some partial answers to this question. 
For instance in \cite{ff,ffg,fran,pan}, it has been proved that the chordal graphs, cographs, complement of bipartite graphs, $\{P_5,K_3\}$-free graphs, $\{P_5, paw\}$-free graphs, $\{P_5,K_4-e\}$-free graphs, $\{P_5,K_4,Kite,Bull\}$-free graphs, connected $\{P_6,C_5,\overline{P_5}, K_{1,3}\}$-free graphs which contain an induced $C_6$, $\mathbb{K}[C_5](m_1,m_2,\ldots,m_5)$, $\{P_2\cup P_3, C_4\}$-free graphs, connected $\{P_5,\overline{P_2\cup P_3},\overline{P_5},Dart\}$-free graphs which contain an induced $C_5$, $\{P_5,C_4\}$-free graphs, 
for $n,m\geq3$, $K_m\Box T$, $C_n\Box T$, $K_m\Box C_n$ and $G^*\Box T$ are $k$-indicated colorable for all $k$ greater than or equal to their chromatic numbers, where $T$ is any tree and $G^*$ is a $\{2K_2,C_4\}$-free graph.
In addition, M. Laso\'{n} in \cite{las} has obtained the indicated chromatic number of matroids. 
In this paper, we have shown that for any graphs $G$ and $H$, $G[H]$ is $k$-indicated colorable for all $k\geq \mathrm{col}(G)\mathrm{col}(H)$. 
Also, we try to add some more families of graphs which are $k$-indicated colorable for all $k\geq\chi(G)$.

In \cite{gell}, D. P. Geller and S. Stahl have proved that for any graphs $G$ and $H$, if $\chi(H)=\ell$, then $\chi(G[H])=\chi(G[K_\ell])$.
We have proved a similar type of result for indicated coloring.
For any graph $G$ and for some special families of graphs $H$ with $\chi(H)=\chi_i(H)=\ell$, we have shown that $G[H]$ is $k$-indicated colorable if and only if $G[K_\ell]$ is $k$-indicated colorable.
One can observe that the lexicographic product of a graph $G$ with a complete graph is a particular case of  the complete expansion of the graph $G$.
In this direction, we are interested in studying the indicated coloring of the complete expansion of a few families of graphs.
In this paper, we have proved that if $T$ is a tree on at least 3 vertices and if $G$ is a graph which is $T$-free or $C_\ell$-free, $\ell\geq4$ (or) $\overline{P_t}$-free, $t\geq4$, then the complete expansion of $G$ is also $T$-free or $C_\ell$-free (or) $\overline{P_t}$-free respectively.
Also we have proved that if the graph $G\cong\mathbb{K}[H]$, where $H\in\mathcal{G}=\Big\{$Chordal graphs, Cographs, Complement of bipartite graphs, $\{P_5,C_4\}$-free graphs, connected $\{P_6,C_5,\overline{P_5}, K_{1,3}\}$-free graphs which contain an induced $C_6$, Complete multipartite graphs$\Big\}$, 
then $G$ is $k$-indicated colorable for all $k\geq\chi(G)$. 
As a consequence of these results, we have shown that  if $G\in\mathcal{G}$ and $H\in\mathcal{F}=\Big\{$Bipartite graphs, Chordal graphs, Cographs, $\{P_5,K_3\}$-free graphs, $\{P_5,Paw\}$-free graphs, Complement of bipartite graphs, $\{P_5,K_4,Kite,Bull\}$-free graphs, connected $\{P_6,C_5,\overline{P_5},K_{1,3}\}$-free graphs which contain an induced $C_6$, $\mathbb{K}[C_5](m_1, m_2 ,\ldots,m_5)$, $\{P_5,C_4\}$-free graphs, connected $\{P_5,\overline{P_2\cup P_3},\overline{P_5},Dart\}$-free graphs which contain an induced $C_5\Big\}$, then $G[H]$ is $k$-indicated colorable for every $k\geq \chi(G[H])$. 
In addition, if $G$ is a Bipartite graph or a $\{P_5,K_3\}$-free graph (or) a $\{P_5,Paw\}$-free graph and $H\in\mathcal{F}$, then we have shown that $\chi_i(G[H])=\chi(G[H])$.

Notations and terminologies not mentioned here are as in \cite{west}.
\section{Indicated coloring of lexicographic product of graphs}\label{lexi}
Let us start Section \ref{lexi} by recalling a result proved in \cite{pan}.
\begin{thm}[\cite{pan}]\label{pan}
\label{col}Any graph $G$ is $k$-indicated colorable for all $k\geq\mathrm{col}(G)$.
\end{thm}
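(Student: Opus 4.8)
The plan is to convert the ordering that witnesses $\mathrm{col}(G)$ directly into a selection strategy for Ann. Write $d=\mathrm{col}(G)$ and fix a linear order $v_1,v_2,\dots,v_n$ of $V(G)$ realizing the minimum in the definition, so that every $v_i$ has back-degree $|\{v_j : j<i,\ v_iv_j\in E(G)\}|\le d-1$. Throughout I assume $k\ge d$ colors are available.

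Ann's strategy will simply be to present the vertices in this fixed order: in round $i$ she selects $v_i$, regardless of how Ben has colored the earlier vertices. The whole point of this choice is that it renders Ben's color decisions powerless, which I would record as the following invariant, proved by a trivial induction on $i$: \emph{at the beginning of round $i$ the set of colored vertices is exactly $\{v_1,\dots,v_{i-1}\}$}. This is immediate, since each round colors precisely the one vertex Ann names and she names them in increasing order of index.

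From the invariant the conclusion follows quickly. When Ann selects $v_i$, its already-colored neighbors form a subset of $\{v_1,\dots,v_{i-1}\}$ adjacent to $v_i$, i.e.\ a subset of its back-neighbors; there are at most $d-1$ of them, so at most $d-1<k$ colors are forbidden and Ben can (indeed must) color $v_i$ properly. The same count shows no vertex is ever blocked before it is reached: for any still-uncolored $v_j$ with $j\ge i$, its colored neighbors at the start of round $i$ lie in $\{v_1,\dots,v_{i-1}\}\subseteq\{v_1,\dots,v_{j-1}\}$ and hence are back-neighbors of $v_j$, of which there are at most $d-1<k$. Thus $v_j$ never sees all $k$ colors on its neighborhood, and after $n$ rounds every vertex has been colored, so Ann wins whenever $k\ge\mathrm{col}(G)$.

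The only point that requires care is the observation driving everything: because Ann controls the order while Ben controls only the colors, the colored set is always a prefix of the fixed ordering, so every vertex's colored neighborhood is contained in its back-neighborhood. Once this is isolated, Ben's adversarial freedom never enters the bound, and the degeneracy inequality $\text{back-degree}\le d-1$ does all the work. I therefore expect no genuine obstacle; the entire content lies in packaging the coloring-number ordering as a selection strategy and verifying that a block can never form.
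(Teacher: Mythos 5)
Your proof is correct and takes exactly the approach this result rests on: the theorem is quoted from \cite{pan}, and the paper itself invokes precisely this argument (Ann presents the vertices in a degeneracy ordering, so that every presented vertex --- and indeed every still-uncolored vertex --- has at most $\mathrm{col}(G)-1$ colored neighbors, all of them back-neighbors) when it reuses the strategy inside the proof of Theorem \ref{colbound}. The prefix invariant plus the back-degree bound is the entire content, and your write-up captures both, including the check that no vertex other than the one being presented can ever become blocked.
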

Let us find a result relating the indicated coloring of $G[H]$ and the coloring number of $G$ and $H$.
\begin{thm}
\label{colbound}For any graphs $G$ and $H$, $G[H]$ is $k$-indicated colorable for all $k\geq\mathrm{col}(G)\mathrm{col}(H)$.
\end{thm}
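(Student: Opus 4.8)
My first instinct is to bound $\mathrm{col}(G[H])$ and then quote Theorem~\ref{col}, but this is hopeless: already $C_4\cong K_2[\overline{K_2}]$ has $\mathrm{col}(C_4)=3>2=\mathrm{col}(K_2)\,\mathrm{col}(\overline{K_2})$, so a degeneracy order of $G[H]$ cannot by itself deliver the bound $\mathrm{col}(G)\mathrm{col}(H)$. Instead I would hand Ann an explicit strategy that exploits the \emph{product structure of the colour set}. Write $a=\mathrm{col}(G)$ and $b=\mathrm{col}(H)$, and fix a $G$-degeneracy order $x_1<\cdots<x_n$ (each $x_i$ having at most $a-1$ back-neighbours) and an $H$-degeneracy order $y_1<\cdots<y_m$ (each $y_j$ having at most $b-1$ back-neighbours). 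Since $\chi(G)\le\mathrm{col}(G)=a$, I also fix a proper colouring $g\colon V(G)\to\{1,\dots,a\}$, and I partition the $k\ (\ge ab)$ available colours into $a$ groups $P_1,\dots,P_a$ with $|P_s|\ge b$ for every $s$.

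The plan is for Ann to play in the $H$-major, $G$-minor order: she processes the copies $G_{y_1},G_{y_2},\dots,G_{y_m}$ in the fixed $H$-order, and within the round devoted to $G_{y_j}$ she picks $(x_1,y_j),(x_2,y_j),\dots$ in the fixed $G$-order. It is essential that $H$ be the \emph{outer} index, because the naive block-by-block order already loses on $C_4$, where Ben saturates the first block and then blocks the second. Ann aims every vertex of the block $H_x$ at the group $P_{g(x)}$; since the vertices of $H_x$ are met in $H$-order, she can run on $H_x$ her winning strategy for the indicated game on $H$ restricted to the palette $P_{g(x)}$, which is legitimate because $|P_{g(x)}|\ge b=\mathrm{col}(H)$ and hence $H$ is $|P_{g(x)}|$-indicated colourable by Theorem~\ref{col}. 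As $g$ is proper, adjacent blocks are aimed at \emph{disjoint} groups, so the colours that the neighbour blocks force on a vertex $(x_i,y_j)$ lie outside $P_{g(x_i)}$.

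The quantity to control is the number of distinct colours Ben has placed on the already-coloured neighbours of a chosen vertex $(x_i,y_j)$: these are the at most $b-1$ $H$-back-neighbours of $y_j$ inside $H_{x_i}$, together with the coloured vertices of the neighbour blocks. If the intended discipline holds — every block using only its own group — the second kind contributes only colours of $\bigcup_{s\ne g(x_i)}P_s$, so the count is at most $(k-|P_{g(x_i)}|)+(b-1)\le (k-b)+(b-1)=k-1<k$; then $(x_i,y_j)$ always has a free colour, no block vertex ever appears, and Ann wins. The main obstacle is precisely that \emph{Ben} assigns colours, and nothing stops him from colouring a vertex of $H_x$ with a colour of some $P_s$, $s\ne g(x)$, whenever such a colour is not already barred by a neighbour, which would let a block leak out of its group and inflate the neighbour count. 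The hard part will therefore be a \emph{surviving invariant}: I would show by induction along the $H$-major/$G$-minor order that the $G$-order inside each round, together with the separation supplied by $g$, confines the colours appearing on the neighbour blocks of any vertex to at most $(a-1)b$ values — equivalently, that blocks in the same $g$-class stay aligned while adjacent-class blocks stay disjoint — so the displayed count never reaches $k$. Checking that this invariant is restored after each of Ben's moves, and handling the boundary rounds in which a block is only partially coloured, is the one delicate verification; the surplus colours present when $k>ab$ merely enlarge the slack $k-b-(b-1)$ and create no extra trouble.
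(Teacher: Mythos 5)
Your proposal contains a genuine gap, and you name it yourself: in the indicated game it is \emph{Ben} who chooses the colours, so ``aiming'' the block $H_x$ at the palette $P_{g(x)}$ has no force. The deferred ``surviving invariant'' is not just unproven --- it is unenforceable, and the fixed $H$-major/$G$-minor order you commit to is defeated outright when $k=ab$. Take $G=H=P_3$ with paths $x_1x_2x_3$ and $y_1y_2y_3$, so $a=b=2$, $k=4$, and the degeneracy orders $x_1,x_2,x_3$ and $y_1,y_2,y_3$. Your order begins $(x_1,y_1),(x_2,y_1),(x_3,y_1),(x_1,y_2),(x_2,y_2),\dots$, and Ben answers $1,2,3,4$; each move is proper (when presented, $(x_2,y_1)$ sees only colour $1$, $(x_3,y_1)$ sees only colour $2$, and $(x_1,y_2)$ sees only $\{1,2\}$), yet $(x_2,y_2)$ --- adjacent to all of $H_{x_1}$, all of $H_{x_3}$ and to $(x_2,y_1)$ --- then sees $\{1,2,3,4\}$ and is blocked the moment it is presented. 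This is exactly Ben violating your invariant: $H_{x_1}$ and $H_{x_3}$ lie in the same colour class of $g$, but he gives them the colours $1$ and $3$, and Ann's choice of presentation order cannot stop him. So no finer analysis can rescue the layer-major order; the strategy itself is wrong.

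Ironically, the approach you dismissed is the one the paper uses. The paper plays block-major in the $G$-degeneracy order, but with an adaptive stopping rule that your $C_4$ objection overlooks: Ann presents vertices of $H_{u_i}$ (in the $H$-degeneracy order) only \emph{until Ben has used $\mathrm{col}(H)$ distinct colours inside that block}, and then moves on. In this first phase a presented vertex sees at most $\mathrm{col}(H)-1$ colours inside its own block and at most $(\mathrm{col}(G)-1)\mathrm{col}(H)$ colours from the at most $\mathrm{col}(G)-1$ previously touched neighbouring blocks, hence at most $\mathrm{col}(G)\mathrm{col}(H)-1\le k-1$ colours in all, so Ben always has a free colour. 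In the second phase Ann finishes each block in the $H$-degeneracy order, and the product structure protects her: every external neighbour of a block is adjacent to \emph{all} of the block, so none of the $\mathrm{col}(H)$ colours already placed inside $H_{u_i}$ can ever appear on an external neighbour; an uncoloured vertex of the block has at most $\mathrm{col}(H)-1$ coloured back-neighbours inside, so at least one protected colour remains available. On $C_4\cong K_2[\overline{K_2}]$ this rule makes Ann present one vertex of the first block (Ben has then used $1=\mathrm{col}(\overline{K_2})$ colour, so she stops), one vertex of the second, and only then return --- precisely the alternating order that wins. Your objection refutes only the naive rule ``exhaust each block fully'', not the capped rule; and with it no proper colouring $g$ of $G$ and no palette partition are needed at all.
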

\begin{proof}
Let $G$ and $H$ be any graphs with $n$ and $n'$ vertices respectively.
By the definition of $\mathrm{col}(G)$ and $\mathrm{col}(H)$, the vertices of $G$ and $H$ can be ordered as $u_1,u_2,\ldots,u_n$ and $v_1,v_2,\ldots,v_{n'}$, such that for $1\leq i\leq n$, $d(u_i)<\mathrm{col}(G)$ in $\langle u_1,u_2,\ldots,u_i\rangle$ and for $1\leq j\leq n'$, $d(v_j)<\mathrm{col}(H)$ in $\langle v_1,v_2,\ldots,v_{j}\rangle$ respectively.
Also it can be seen that by presenting the vertices in the order  $u_1,u_2,\ldots,u_n$, Ann has a winning strategy using $\ell$ colors for $G$, for every $\ell\geq$ col$(G)$. The same can be observed for $H$. 

Let $k\geq\mathrm{col}(G)\mathrm{col}(H)$. Let Ann start by presenting the vertices of $H_{u_1}$ in the order $(u_1,v_1), \linebreak (u_1,v_2),\ldots,(u_1,v_{n'})$ until Ben uses $\mathrm{col}(H)$ colors and let $(u_1,v_{j_1})$, $j_1\leq n'$, be the last vertex presented by Ann in $H_{u_1}$.  
Next, let Ann present the vertices of $H_{u_2}$ in the  order $(u_2,v_1),  (u_2,v_2),\ldots,(u_2,v_{n'})$ until Ben uses $\mathrm{col}(H)$ colors in $H_{u_2}$ and let $(u_2,v_{j_2})$, $j_2\leq n'$, be the last vertex presented by Ann in $H_{u_2}$. 
Let us assume that Ann has followed the same strategy to present the vertices of $H_{u_3},\ldots,H_{u_{i-1}}$, $i\leq n$ until Ben uses  $\mathrm{col}(H)$ colors in each of $H_{u_p}$, $3\leq p\leq i-1$ and let $(u_p,v_{j_p})$, $j_p\leq n'$, be the last vertex presented by Ann in $H_{u_p}$. 
While considering the vertices in $H_{u_i}$, each of them are adjacent to at most $(\mathrm{col}(G)-1)\mathrm{col}(H)$  distinct colors given to the vertices in $\cup_{j=1}^{i-1} H_{u_j}$. 
So Ben has at least $k-(\mathrm{col}(G)-1)\mathrm{col}(H)\geq \mathrm{col}(G)\mathrm{col}(H)-(\mathrm{col}(G)-1)\mathrm{col}(H)=\mathrm{col}(H)$ colors available for the vertices of $H_{u_i}$. 
This is true for every $i\leq n$. 
Thus Ann can follow the same strategy to present the vertices of $H_{u_i}$, $3\leq i\leq n$ until Ben uses $\mathrm{col}(H)$ colors and let $(u_i,v_{j_i})$, $j_i\leq n'$, be the last vertex presented by Ann in $H_{u_i}$.
 

Finally, let Ann present the remaining vertices of $G[H]$ in the order $(u_1,v_{j_1+1}),\linebreak(u_1,v_{j_1+2}),\ldots,(u_1,v_{n'}),(u_2,v_{j_2+1}),(u_2,v_{j_2+2}),\ldots,(u_2,v_{n'}),\ldots,(u_n,v_{{j_n}+1}),(u_n,v_{j_n+2}),\ldots,(u_n,v_{n'})$. For $1\leq i\leq n$, since the neighbors outside $H_{u_i}$ for any vertex in $H_{u_i}$ is the same, the $\mathrm{col}(H)$ colors given to the vertices in $H_{u_i}$ are always available to the uncolored vertices in $H_{u_i}$ and hence Ben cannot create a block vertex. Thus Ann has a winning strategy using $k$ colors, for every $k\geq$ col$(G)$col$(H)$.
\end{proof}
By Theorem \ref{pan}, we know that $\mathrm{col}(G[H])$ is an upper bound for $\chi_i(G[H])$. To exhibit that $\mathrm{col}(G)\mathrm{col}(H)$ is a better upper bound for $\chi_i(G[H])$, let us show that $\mathrm{col}(G[H])-\mathrm{col}(G)\mathrm{col}(H)$ can be arbitrarily large. 
Let $G$ and $H$ be any two graphs and let $G'\subseteq G$ and $H'\subseteq H$ such that $\delta(G')=\mathrm{col}(G)-1$ and $\delta(H')=\mathrm{col}(H)-1$. 
It can be easily observed that $\delta(G[H])=\delta(G)|H|+\delta(H)$. Hence 

$\begin{array}{rl}
\mathrm{col}(G[H])\geq&\mathrm{col}(G'[H'])\\
\geq&1+\delta(G'[H'])\\
=&1+\delta(G')|H'|+\delta(H')\\
=&1+(\mathrm{col}(G)-1)|H'|+\mathrm{col}(H)-1\\
=&\mathrm{col}(G)|H'|-|H'|+\mathrm{col}(H)\\
\end{array}$

\noindent Therefore, we see that\\  
$\begin{array}{rl}
\mathrm{col}(G[H])-\mathrm{col}(G)\mathrm{col}(H)\geq&\mathrm{col}(G)|H'|-\mathrm{col}(G)\mathrm{col}(H)-|H'|+\mathrm{col}(H)\\
=&\mathrm{col}(G)(|H'|-\mathrm{col}(H))-\left(|H'|-\mathrm{col}(H)\right)\\
=&(\mathrm{col}(G)-1)(|H'|-\mathrm{col}(H))\\
=&(\mathrm{col}(G)-1)(|H'|-1-\delta(H'))
\end{array}$

For every non-complete graph $H'$, $|H'|-1-\delta(H')$ is strictly positive. So we can suitably choose $G$ and $H$ such that $(\mathrm{col}(G)-1)(|H'|-1-\delta(H'))$ is arbitrarily large.

We now define a family $\mathcal{H}$ of graphs. 

 A graph $G$ belongs to $\mathcal{H}$ if Ann has a winning strategy using $\chi_i(G)$ colors which she can follow until Ben uses $\chi_i(G)$ colors for the vertices of $G$ and for the remaining vertices she has a way of extending this to a winning strategy using $k$ colors, for any $k\geq\chi_i(G)$. 

Let us now consider the indicated coloring of the lexicographic product of any graph $G$ with a graph $H\in \mathcal{H}$ with $\chi(H)=\chi_i(H)$.
\begin{thm}
\label{HwithKl}For any graph $G$ and for any graph $H\in \mathcal{H}$ with $\chi(H)=\chi_i(H)=\ell$, 
$G[H]$ is $k$-indicated colorable if and only if $G[K_\ell]$ is $k$-indicated colorable.
In particular, $\chi_i(G[H])=\chi_i(G[K_\ell])$.
\end{thm}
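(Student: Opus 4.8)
The plan is to factor both indicated games through a common abstract \emph{set-colouring game} $\Gamma(G,\ell,k)$ played on the single graph $G$: a position assigns to every vertex $u$ a set $S_u\subseteq\{1,\dots,k\}$ with $|S_u|\le\ell$ and $S_u\cap S_{u'}=\emptyset$ whenever $uu'\in E(G)$; in each round Ann names a vertex $u$ with $|S_u|<\ell$ and Ben must return a colour $c\in\{1,\dots,k\}\setminus\big(S_u\cup\bigcup_{u'\sim u}S_{u'}\big)$, after which $c$ is added to $S_u$; Ann wins once every $S_u$ has size $\ell$, and Ben wins if some request admits no colour. I will prove the theorem by establishing two equivalences: Ann wins the indicated game on $G[K_\ell]$ with $k$ colours if and only if she wins $\Gamma(G,\ell,k)$ (\textbf{Claim A}), and the same equivalence holds for $G[H]$ (\textbf{Claim B}). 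The stated ``if and only if'', and hence $\chi_i(G[H])=\chi_i(G[K_\ell])$ after minimising over $k$, then follow at once.

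\textbf{Claim A} is a relabelling: in $G[K_\ell]$ the copy at $u$ is a clique on $\ell$ vertices, so presenting an uncoloured clique vertex is exactly a request for a new colour at $u$, the clique forces the $\ell$ colours to be distinct, and the complete join between adjacent copies forces $S_u\cap S_{u'}=\emptyset$; the two games coincide move for move. For the forward half of \textbf{Claim B} I fix once and for all a proper $\ell$-colouring $\phi\colon V(H)\to\{1,\dots,\ell\}$, which exists and is onto since $\chi(H)=\ell$. Ann runs a virtual $G[H]$ game under her assumed winning strategy $\pi$; whenever $\pi$ presents a vertex $v$ of copy $u$, she inspects the class $\phi(v)$: if that class already owns a colour in $S_u$ she colours $v$ with it, and otherwise she issues a fresh request at $u$ in the real $\Gamma$ game, receives Ben's colour $c$, assigns $c$ to the class $\phi(v)$, and colours $v$ with $c$. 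Properness of $\phi$ makes this virtual colouring legal in $G[H]$, so $\pi$ keeps running and the colours appearing in $H_u$ are exactly $S_u$. The forcing step, where $\chi(H)=\ell$ does the real work, is this: if Ann's $\Gamma$ play ever blocked at a request for $u$, then $\{1,\dots,k\}=S_u\cup\bigcup_{u'\sim u}S_{u'}$, so by disjointness the only colours ever admissible for vertices of $H_u$ are those of $S_u$, a set of size strictly less than $\ell$; continuing the virtual game, $\pi$ would be forced to complete a proper colouring of $H_u\cong H$ with fewer than $\ell$ colours, which is impossible, contradicting that $\pi$ wins $G[H]$. Hence no block occurs, every class of every copy is eventually served, each $|S_u|$ reaches $\ell$, and Ann wins $\Gamma$.

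For the reverse half of \textbf{Claim B} Ann fixes a winning strategy $\rho$ for $\Gamma(G,\ell,k)$ together with the two-phase strategy guaranteed by $H\in\mathcal H$. Inside each copy she follows the $\mathcal H$-strategy: its first phase forces Ben to introduce exactly $\ell$ colours, and each time a new colour first appears in $H_u$ she reports it to $\rho$ as Ben's answer to a request at $u$, so that the colour set of $H_u$ is built up as a legal $\Gamma$ position; its second phase then completes $H_u$ using only those $\ell$ colours. Whenever $\rho$ directs attention to a new vertex $u$, Ann switches to advancing $H_u$, presenting its vertices until the next new colour is produced before moving on—well defined because $\chi(H)=\ell$ keeps forcing new colours until $\ell$ are present. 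Since $\rho$ wins $\Gamma$, every request it makes is answerable, i.e.\ each copy always has at least $\ell$ admissible colours, so the $\mathcal H$-strategy can be carried out in every $H_u$ and Ann completes all of $G[H]$.

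The hard part is concentrated in this reverse half, and it is exactly what the hypothesis $H\in\mathcal H$ is designed to overcome. With $k\ge\ell$ colours globally available, a careless play would let Ben spend an $(\ell+1)$-st colour inside some $H_u$; such a position has no counterpart in the clique game $G[K_\ell]$ and destroys the correspondence with $\Gamma$. Membership in $\mathcal H$ is precisely the guarantee that Ann can force each copy to settle for exactly $\ell$ colours—its first phase caps the number of colours at $\ell$, and its extension completes the copy without ever calling for an $(\ell+1)$-st—so the per-copy colour sets stay of size $\ell$ and the reduction to $\Gamma$ goes through. The secondary bookkeeping issue, reconciling the ``one new colour per clique vertex'' rhythm of $G[K_\ell]$ with the sporadic appearance of new colours inside $H$, is handled uniformly by advancing a copy only until its next new colour, whose termination is again underwritten by $\chi(H)=\ell$.
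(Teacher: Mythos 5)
Your proposal is correct and takes essentially the same route as the paper's proof: your game $\Gamma(G,\ell,k)$ is move-for-move the indicated game on $G[K_\ell]$ (as your Claim A observes), the forward half of Claim B is exactly the paper's simulation of a $G[H]$-winning strategy against a class-respecting Ben built from a fixed proper $\ell$-colouring of $H$, and the reverse half is the paper's ``advance each copy under its $\ell$-colour strategy until Ben produces a new colour, feed that colour to the clique game, then finish every copy with the extension guaranteed by $H\in\mathcal{H}$'' argument. The set-colouring game is a cleaner packaging of the bookkeeping, but the decomposition and the roles played by $\chi(H)=\chi_i(H)=\ell$ and by membership in $\mathcal{H}$ coincide with the paper's.
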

\begin{proof}
Let $G$ be any graph and $H\in \mathcal{H}$ be a graph with $\chi(H)=\chi_i(H)=\ell$ whose vertices are $u_1,u_2,\ldots,u_n$ and $v_1,v_2,\ldots,v_{n'}$ respectively. 
Let us first assume that $G[K_\ell]$ is $k$-indicated colorable
and let  $st_{G[K_\ell]}$ denote a  winning strategy of Ann for $G[K_\ell]$ using $k$ colors. 
Also, let $st_H$ be a winning strategy of Ann for $H$ using $\ell$ colors. Corresponding to the strategy $st_H$ of $H$, for  $1\leq i\leq n$, Ann can get a winning strategy for $H_{u_i}$, by presenting the vertex $(u_i,v)$ whenever $v$ is presented in the strategy $st_H$. Let us call this winning strategy of $H_{u_i}$ as $st_{H_i}$.   
Using the strategies $st_{G[K_\ell]}$ and $st_{H_i}$, for $1\leq i\leq n$, we shall construct a winning strategy for Ann for the graph $G[H]$ using $k$ colors as follows.

In $st_{G[K_\ell]}$, if the first vertex presented by Ann belongs to $K_{\ell_{u_i}}$, for some $i$, $1\leq i\leq n$, then let Ann present the first vertex  from $H_{u_i}$ by following the strategy $st_{H_i}$ of $H_{u_i}$.
If Ben colors it with a color, say $c_1$, then we continue with the strategy $st_{G[K_\ell]}$ by assuming that the color $c_1$ is given to the vertex which was presented in $K_{\ell_{u_i}}$.
If the second vertex presented by Ann in the strategy $st_{G[K_\ell]}$ belongs to $K_{\ell_{u_j}}$, for some $j$ (not necessarily distinct from $i$), $1\leq j\leq n$, then as per the strategy of $st_{H_j}$, let Ann present the vertices  of $H_{u_j}$  until a new color is given by Ben to a vertex in $H_{u_j}$, say $c_2$. 
That is, if Ann presents the vertices from the same $H_{u_i}$, then Ann  will continue presenting the vertices until a vertex from a new color class in $H_{u_i}$ is presented. 
Instead, if Ann presents a vertex from  $H_{u_j}$, $i\neq j$ and $1\leq j\leq n$, then that vertex will be a vertex from a new color class in $H_{u_j}$. 
This is because this is the first vertex presented from $H_{u_j}$. 
Then we continue with the strategy $st_{G[K_\ell]}$ by assuming that the color $c_2$ is given to the vertex presented  by Ann in $K_{\ell_{u_j}}$. 
In general,  if the vertex presented by Ann in $st_{G[K_\ell]}$ belongs to $K_{\ell_{u_r}}$, for some $r$, $1\leq r\leq n$,  then in $G[H]$, let Ann present the vertices  in $H_{u_r}$ by continuing with the strategy $st_{H_r}$,  until a new color is given by Ben to a vertex in $H_{u_r}$, say $c_r$. 
Now we shall continue with the strategy $st_{G[K_\ell]}$ by assuming that the color $c_r$ is given to the vertex presented by Ann in $K_{\ell_{u_r}}$. Repeat this process until all the vertices in $G[K_\ell]$ have been presented  using the strategy $st_{G[K_\ell]}$. While following this strategy in $G[H]$,  suppose for some $p,q$, $1\leq p\leq n$, $1\leq q\leq n'$, Ben creates a block vertex $(u_p,v_q)$ in $G[H]$. Then $(u_p,v_q)$ must be adjacent to  all the $k$ colors. 
According to the Ann strategy for $G[H]$, if a vertex of $G[H]$ in $H_{u_p}$ is adjacent with a color, then there exists a vertex of $G[K_\ell]$ in $K_{\ell_{u_p}}$ which is adjacent with the same color. 
Thereby, there exists an uncolored vertex of $G[K_\ell]$ in $K_{\ell_{u_p}}$ which is a block vertex, a contradiction to $st_{G[K_{\ell}]}$ being a winning strategy of $G[K_\ell]$. 
So, Ben cannot create a block vertex in $G[H]$ when Ann follows this strategy.

At this stage, that is, when all the vertices in $G[K_\ell]$ have been presented using the strategy $st_{G[K_\ell]}$ as shown above, we see that the number of  
colors used in $H_{u_i}$, for $1\leq i\leq n$, will be exactly $\ell$. Also 
 there maybe some uncolored vertices left in $G[H]$. 
For $1\leq i,j\leq n$, the colors given to the vertices of  $H_{u_i}$ cannot be given to the vertices of $H_{u_j}$, for any $u_j$ such that $u_iu_j\in E(G)$. Thus Ben has at least $\ell$ colors available to color the remaining uncolored vertices of $H_{u_i}$. Also by our assumption that $H\in\mathcal{H}$, even if the number of colors available for Ben is $\ell'\geq \ell$,  Ann will still have a winning strategy to present the remaining uncolored vertices of $H_{u_i}$ using $\ell'$ colors.
Hence $G[H]$ is $k$-indicated colorable.

Now, let us assume that $G[H]$ is $k$-indicated colorable.   
Let $f$ be some $\chi$-coloring of $H$. Corresponding to this $\chi$-partition of $H$, for any $i$ such that $1\leq i\leq n$, we can get a $\chi$-partition for $H_{u_i}$, say $f_i$, by placing two vertices $(u_i,v)$ and $(u_i,w)$ in the same color class whenever $v$ and $w$ are in the same color class in $f$. Also, let $st_{G[H]}$ be a  winning strategy of Ann  using $k$ colors for $G[H]$ when Ben uses the following strategy: For each of the $H_{u_i}$, Ben will not change the color classes of $f_i$. That is, Ben will color two vertices with the same color in $H_{u_i}$ if and only if they belong to the same color class in $f_i$.  When we say that we use the strategy $st_{G[H]}$ for $G[H]$, it means that the strategy followed by Ben will be the fixed strategy  mentioned in the previous line. Using this strategy $st_{G[H]}$ and the $\chi$-partitions $f_i$, we shall construct a winning strategy for Ann for the graph $G[K_\ell]$ using $k$ colors as follows.

In $st_{G[H]}$, if the first vertex presented by Ann belongs to  $H_{u_i}$, for some $i$, $1\leq i\leq n$, then let Ann present the first vertex from $K_{\ell_{u_i}}$.
If Ben colors it with a color, say $c_1$, then we continue with the strategy $st_{G[H]}$ by assuming that the color $c_1$ is given to the vertex which was presented in $H_{u_i}$. 
As per $st_{G[H]}$, let Ann continue by presenting the vertices of $G[H]$ until a vertex of a new color class (with respect to the coloring $f_j$) in some $H_{u_j}$ (not necessarily different from $H_{u_i}$), $1\leq j\leq n$, is presented by Ann.  That is, if Ann presents the vertices from the same $H_{u_i}$, then Ann  will continue presenting the vertices until a vertex from a new color class (with respect to the coloring $f_i$) in $H_{u_i}$ is presented. 
Instead, if Ann presents a vertex from a $H_{u_j}$, $i\neq j$ and $1\leq j\leq n$, then that vertex will be a vertex from a new color class in $H_{u_j}$. 
This is because this is the first vertex presented from $H_{u_j}$.   
Then in $G[K_\ell]$, let Ann present the next vertex in $K_{\ell_{u_j}}$. 
If Ben colors it with the color, say $c_2$, then we continue with the strategy $st_{G[H]}$ by assuming that the color $c_2$ is given to the vertex from the new color class in $H_{u_j}$.  
Again, as per $st_{G[H]}$, let Ann continue by presenting the vertices of $G[H]$ until a vertex of a new color class (with respect to the coloring  $f_s$) in some $H_{u_s}$, $1\leq s\leq n$, is presented by Ann. 
Continue this strategy until all the vertices of $G[K_\ell]$ are presented.  
While following this strategy, 
suppose for some $p$, $1\leq p\leq n$, Ben creates a block vertex $(u_p,w)$ in $K_{\ell_{u_p}}$. 
Then $(u_p,w)$ will be adjacent with vertices receiving all the $k$ colors. 
 Since there can be at most $\ell-1$ colored vertices in $K_{\ell_{u_p}}$, $(u_p,w)$ must have neighbors with at least $k-\ell+1$ distinct colors outside $K_{\ell_{u_p}}$.
Therefore every vertex in $H_{u_p}$ will be adjacent with the vertices receiving at least $k-\ell+1$ distinct colors outside $H_{u_p}$. 
Thus the number of available colors for $H_{u_p}$ in $G[H]$ is at most $\ell-1$, a contradiction to $st_{G[H]}$ being a winning strategy for $G[H]$.
So, Ben cannot create a block vertex in $G[K_\ell]$.
Hence  $G[K_\ell]$ is $k$-indicated colorable.
\end{proof}

\section{Consequences of Theorem \ref{HwithKl}}\label{conseq}

Let us recall some of the results shown in \cite{ffg}, \cite{and} and \cite{pan}.

\begin{thm}[\cite{pan}]\label{union}
Let $G=G_1\cup G_2$. If $G_1$ is $k_1$-indicated colorable for every
$k_1\geq \chi_i(G_1)$ and $G_2$ is $k_2$-indicated colorable for every $k_2 \geq \chi_i(G_2)$, then
$\chi_i(G)=\max\{\chi_i(G_1),  \chi_i(G_2)\}$ and $G$ is $k$-indicated colorable for every $k\geq\chi_i(G)$.
\end{thm}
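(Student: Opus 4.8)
The plan is to establish the two inequalities separately, exploiting the fact that in the disjoint union $G=G_1\cup G_2$ there are no edges between $G_1$ and $G_2$, so the block condition for any vertex depends only on the colors appearing inside its own part.

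For the lower bound $\chi_i(G)\geq\max\{\chi_i(G_1),\chi_i(G_2)\}$, I would exhibit a winning strategy for Ben. Suppose $k<\chi_i(G_1)$ (the case $k<\chi_i(G_2)$ is symmetric). By definition of $\chi_i(G_1)$, Ben has a winning strategy $\beta_1$ against Ann on $G_1$ using $k$ colors. In the game on $G$, whenever Ann presents a vertex of $G_1$ Ben responds according to $\beta_1$, and whenever Ann presents a vertex of $G_2$ Ben colors it properly (or wins immediately if he cannot). Since every neighbour of a $G_1$-vertex lies in $G_1$, the interspersed moves in $G_2$ do not affect the partial coloring seen by $\beta_1$, so $\beta_1$ still forces a block vertex inside $G_1$; that vertex is also a block vertex of $G$. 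Hence $G$ is not $k$-indicated colorable whenever $k<\max\{\chi_i(G_1),\chi_i(G_2)\}$.

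For the reverse direction, fix any $k\geq\max\{\chi_i(G_1),\chi_i(G_2)\}$. Here the hypothesis that each $G_i$ is $k_i$-indicated colorable for \emph{every} $k_i\geq\chi_i(G_i)$ is essential: it guarantees that Ann has a winning strategy $st_1$ for $G_1$ and a winning strategy $st_2$ for $G_2$, both using the same number $k$ of colors. This is precisely the point at which one cannot appeal to a single $\chi_i$-strategy, since $k$-indicated colorability need not be monotone in $k$. Ann's combined strategy on $G$ is then to present and fully color $G_1$ first using $st_1$, and only afterwards present the vertices of $G_2$ using $st_2$.

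It remains to verify that Ben can never create a block vertex under this strategy. During the $G_1$-phase, every vertex of $G_2$ has all of its neighbours (which lie in $G_2$) still uncolored, so no $G_2$-vertex can be blocked, while the safety of the $G_1$-vertices is guaranteed by $st_1$ being winning. During the $G_2$-phase, every vertex of $G_1$ is already colored, so no $G_1$-vertex can be blocked, and the block condition for a $G_2$-vertex depends only on colors inside $G_2$, which is controlled by $st_2$; the colors previously placed on $G_1$ are irrelevant because $G_1$ and $G_2$ are non-adjacent. Thus Ann wins with $k$ colors, so $G$ is $k$-indicated colorable for all $k\geq\max\{\chi_i(G_1),\chi_i(G_2)\}$; together with the lower bound this yields $\chi_i(G)=\max\{\chi_i(G_1),\chi_i(G_2)\}$ and the stated conclusion. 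I expect the only genuinely delicate point to be this bookkeeping that no block arises in the not-yet-played part during each phase, which is exactly where the absence of edges between $G_1$ and $G_2$ is used.
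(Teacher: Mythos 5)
Your proposal is correct. A structural remark first: this paper does not actually prove Theorem \ref{union}; it is recalled from \cite{pan} without proof, so there is no in-document argument to compare yours against, and your proof must be judged on its own merits. So judged, it is sound and is the natural argument. For the upper bound, presenting all of $G_1$ under a $k$-color winning strategy and only then all of $G_2$ under a $k$-color winning strategy works exactly as you say, and you correctly isolate the role of the hypothesis: since $k$-indicated colorability is not known to be monotone in $k$, one needs each $G_i$ to be indicated colorable with the \emph{common} number of colors $k\geq\max\{\chi_i(G_1),\chi_i(G_2)\}$, which is precisely what the hypothesis supplies; your check that no block vertex can arise in the not-yet-played part (uncolored neighborhoods in phase one, a completed and non-adjacent $G_1$ in phase two) is the right bookkeeping. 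For the lower bound, one point deserves to be explicit: for $k<\chi_i(G_1)$, the definition of $\chi_i(G_1)$ only says that Ann has \emph{no} winning strategy with $k$ colors; to obtain Ben's winning strategy $\beta_1$ you must invoke determinacy of finite perfect-information games without draws (Zermelo's theorem). This is standard and unobjectionable, but it is a genuine step, not literally ``by definition.'' An alternative that avoids determinacy altogether: if Ann had a $k$-color winning strategy on $G$, she could convert it into one on $G_1$ by simulating Ben's answers on the $G_2$-vertices herself (a proper color always exists for the simulated vertex, else her $G$-strategy would already have lost), whence $\chi_i(G_1)\leq k$ and symmetrically $\chi_i(G_2)\leq k$. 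Either route yields the lower bound, and together with your upper bound the theorem follows.
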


\begin{thm}[\cite{ffg,and,pan}]\label{bipart}
Let $\mathcal{F} = \Big\{$Bipartite graphs, Chordal graphs, Cographs, $\{P_5,K_3\}$-free graphs, $\{P_5,Paw\}$-free graphs, Complement of bipartite graphs, $\{P_5,K_4,Kite,Bull\}$-free graphs, connected $\{P_6,C_5,\overline{P_5},K_{1,3}\}$-free graphs which contain an induced $C_6$, $\mathbb{K}[C_5](m_1, m_2 ,\ldots,m_5)$, $\{P_5,C_4\}$-free graphs, connected $\{P_5,\overline{P_2\cup P_3},\overline{P_5},Dart\}$-free graphs which contain an induced $C_5\Big\}$. If $G\in \mathcal{F}$, then $G$ is $k$-indicated colorable for all $k\geq \chi(G)$.
\end{thm}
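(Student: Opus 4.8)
The statement is a recollection that assembles results already proved in \cite{and}, \cite{ffg} and \cite{pan}, so the plan is not to give a single unified argument but to verify that every family listed in $\mathcal{F}$ is covered by one of these sources. First I would partition the families according to where each is established: the bipartite graphs together with the classes first treated by Grzesik belong to \cite{and}; the chordal graphs, cographs and the forbidden-subgraph classes handled via connected-component reductions belong to \cite{pan}; and the $P_5$-free subclasses together with the $C_5$/$C_6$-based classes belong to \cite{ffg}. For each family I would then point to the precise statement guaranteeing $k$-indicated colorability for all $k\geq\chi(G)$, checking in particular that the quantifier over $k$ is the one we need and not merely the special case $k=\chi(G)$.

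To indicate what underlies each cited result, the common mechanism is a structural decomposition feeding a strategy for Ann. For the hereditary classes one would use the relevant characterization --- modular decomposition for cographs and for the $P_5$-free subclasses, clique-cutset decomposition for chordal graphs, and the explicit structure theorems for the connected $\{P_6,C_5,\overline{P_5},K_{1,3}\}$-free and $\{P_5,\overline{P_2\cup P_3},\overline{P_5},Dart\}$-free classes that contain an induced $C_6$ or $C_5$. In each case Ann's winning strategy is built by presenting the vertices in an order that forces Ben to respect a fixed optimal coloring, while Theorem \ref{union} lets one glue together strategies across connected components and thereby reduce to the connected (or modularly indecomposable) case.

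The genuinely hard part in each of the individual proofs --- and the reason they occupy separate papers --- is handling the clause ``for all $k\geq\chi(G)$'' rather than only $k=\chi(G)$: one must argue that the surplus colors never give Ben the room to manufacture a block vertex. For the forbidden-subgraph families this reduces to a careful case analysis of how an induced copy of a forbidden graph could be forced to appear during play, which is the technical core of each reference. Since these analyses are already completed in \cite{and}, \cite{ffg} and \cite{pan}, the proof here reduces to assembling the citations family by family, and the only work that remains is the bookkeeping of matching each member of $\mathcal{F}$ to its source.
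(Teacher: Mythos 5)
Your proposal matches the paper's treatment exactly: Theorem \ref{bipart} is stated there purely as a recollection of results already proved elsewhere, with no proof given beyond the citations to \cite{and}, \cite{ffg} and \cite{pan}, so your plan of assembling the references family by family (with Theorem \ref{union} handling disconnected graphs) is precisely what the paper does. One bookkeeping correction: the $\{P_5,K_3\}$-free and $\{P_5,Paw\}$-free classes are established in \cite{pan}, not \cite{ffg}, but this attribution slip does not affect the validity of the citation-based argument.
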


In \cite{ffg,pan,and}, if one closely observe the proof's of the families of graphs in $\mathcal{F}$ while showing that they are $k$-indicated colorable for every $k\geq \chi(G)$, we can see that the winning strategy of Ann will be independent of the choice of $k$. Hence any graph in $\mathcal{F}$ is also a graph in $\mathcal{H}$.

\begin{thm}\label{fch}
\ $\mathcal{F}\subseteq \mathcal{H}$.
\end{thm}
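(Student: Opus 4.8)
The plan is to derive $\mathcal{F}\subseteq\mathcal{H}$ by inspecting, family by family, the winning strategies that Ann is given in \cite{ffg,pan,and}, and checking that each of them meets the robustness condition in the definition of $\mathcal{H}$. First I would fix an arbitrary $G\in\mathcal{F}$ and record that, by Theorem \ref{bipart}, $G$ is $k$-indicated colorable for every $k\geq\chi(G)$; combined with the standing bound $\chi(G)\leq\chi_i(G)$, this forces $\chi_i(G)=\chi(G)$. Write $\ell=\chi_i(G)$, so that the phrase ``until Ben uses $\chi_i(G)$ colors'' in the definition of $\mathcal{H}$ refers to the same quantity $\ell$ that governs the references' strategies.

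The heart of the argument is the observation already flagged in the paragraph preceding the theorem: in each of the cited proofs, the vertex that Ann presents at any stage is determined \emph{only} by the structure of $G$ and by the colors Ben has assigned so far, and never by the size $k$ of the palette. Consequently a single strategy $S$ serves simultaneously for every $k\geq\ell$. I would then match $S$ to the two-phase description of $\mathcal{H}$ as follows. Playing $S$ with $\ell$ colors, Ann proceeds until Ben has used all $\ell$ of them; since $S$ does not consult $k$, this initial segment of play is identical no matter how many colors are ultimately available. Because $S$ is a winning strategy for \emph{every} $k\geq\ell$, its continuation never produces a block vertex, whatever subset of the $k\geq\ell$ colors Ben places on the remaining vertices. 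Hence Ann can extend the initial segment to a win using any $k\geq\ell$ colors, which is exactly the requirement for membership in $\mathcal{H}$; as $G$ was arbitrary, $\mathcal{F}\subseteq\mathcal{H}$.

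The main obstacle is purely one of verification: the claim must be checked for all eleven families listed in $\mathcal{F}$, whose strategies are spread across three references. For several of these families the published argument is phrased in terms of ``$k$ colors,'' so it could superficially appear $k$-dependent; for those one has to reread the proof and confirm that the sole function of $k$ is to guarantee that a free color is always present when a vertex is presented, so that the decision rule itself is blind to the color count. This is bookkeeping rather than new mathematics --- which is why the result can be settled in a single sentence --- but it is the only place where any genuine checking is required.
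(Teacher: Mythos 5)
Your proposal is correct and follows essentially the same route as the paper: the paper's entire justification is the observation (stated in the paragraph preceding the theorem) that in \cite{ffg,pan,and} Ann's winning strategies are independent of the number of colors $k$, which is exactly the $k$-blind single strategy $S$ you describe. Your write-up merely makes explicit the bookkeeping the paper leaves implicit --- that $\chi_i(G)=\chi(G)$ for $G\in\mathcal{F}$ and that a $k$-independent strategy winning for every $k\geq\ell$ automatically satisfies the two-phase requirement defining $\mathcal{H}$ --- so it is a slightly more careful rendering of the same argument.
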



%
\begin{thm}[\cite{ffg}]\label{comexpbi}
Let $G$ be a bipartite graph and $G'\cong\mathbb{K}[G](m,m,\ldots,m)\cong G[K_m]$ be the complete expansion of $G$, for some $m\geq1$. Then $\chi_i(G')=2m=\chi(G')$.
\end{thm}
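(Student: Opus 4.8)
The plan is to prove the two inequalities $\chi_i(G')\ge 2m$ and $\chi_i(G')\le 2m$ separately, the first being routine and the second carrying all the content. Throughout, $G$ is a bipartite graph with at least one edge (so that $\chi(G)=2$); fix a bipartition $(A,B)$ and an edge $uv$ with $u\in A$, $v\in B$, and write $K_m^{x}$ for the copy of $K_m$ sitting over a vertex $x\in V(G)$ (this is the fibre $H_x$ with $H=K_m$). For the lower bound I would first record that $\chi(G')=2m$: since every vertex of $K_m^{u}$ is adjacent to every vertex of $K_m^{v}$, the set $K_m^{u}\cup K_m^{v}$ induces a $K_{2m}$, so $\omega(G')\ge 2m$; conversely, colouring each fibre over $A$ with $\{1,\dots,m\}$ and each fibre over $B$ with $\{m+1,\dots,2m\}$ is a proper $2m$-colouring, whence $\chi(G')=2m$. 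The general chain $\omega(G')\le\chi(G')\le\chi_i(G')$ then yields $\chi_i(G')\ge 2m$.

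For the upper bound I would exhibit a winning strategy for Ann with the colour set $[2m]=\{1,\dots,2m\}$. The central observation is a rigidity property: if a fibre $K_m^{x}$ has already received a set $C$ of $m$ colours, then each vertex of a neighbouring fibre $K_m^{y}$ (i.e. $xy\in E(G)$) must avoid all of $C$; since only $2m-m=m$ colours remain, $K_m^{y}$ is forced to use exactly $\overline{C}=[2m]\setminus C$. Ann exploits this by presenting the fibres one connected component of $G$ at a time, and within a component in BFS order, presenting all $m$ vertices of a fibre consecutively before moving on. The root fibre of a component is presented first: having no coloured neighbours, Ben colours its $m$ vertices with an arbitrary $m$-set $C_0$ and no block can occur. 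For a later fibre $K_m^{y}$, when Ann presents its $t$-th vertex the number of available colours is exactly $m-(t-1)\ge 1$, so Ben is constrained but can never create a block.

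To justify that count I would prove, by induction along the BFS order, the invariant that all coloured fibres over $A$ carry one common set $C_A$ and all coloured fibres over $B$ carry the complementary set $C_B=\overline{C_A}$, where $C_A=C_0$ if the root lies in $A$. When Ann presents a fibre $K_m^{y}$, bipartiteness guarantees that all of its already-coloured neighbours lie on the side opposite to $y$, and the invariant says they all carry the same $m$-set; hence the forbidden colours for $K_m^{y}$ form a single $m$-set, the forcing property pins the colour set of $K_m^{y}$ to its complement, and the invariant is maintained. Since BFS guarantees every non-root fibre has a coloured neighbour (its parent) at the moment it is presented, the forcing applies, and non-adjacent components cannot block one another while edgeless components are coloured freely. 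This shows $G'$ is $2m$-indicated colourable, so $\chi_i(G')\le 2m$, and combined with the lower bound we obtain $\chi_i(G')=2m=\chi(G')$.

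The main obstacle is precisely establishing the consistency invariant: a priori a fibre may have several already-coloured neighbours, and the argument works only because bipartiteness places them all on one side while the BFS-propagated forcing has already equalised their colour sets. I would therefore take care to verify that the forcing step remains valid when a fibre has many coloured neighbours (their colour sets coincide, so the forbidden set is still a single $m$-set) and that BFS indeed supplies each non-root fibre with a coloured neighbour when it is presented; these two points are what make the strategy succeed for an arbitrary bipartite base graph.
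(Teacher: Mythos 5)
The paper does not actually prove Theorem~\ref{comexpbi}: it is imported verbatim from \cite{ffg} and used as a black box, so there is no in-paper proof to measure your argument against. Judged on its own, your proof is correct and self-contained. The lower bound via $\omega(G')\ge 2m$ and the explicit proper $2m$-colouring are right, and your upper-bound strategy is sound: the forcing observation (a fully coloured fibre with colour set $C$ pins every neighbouring fibre to $[2m]\setminus C$), combined with presenting fibres component by component in BFS order and finishing each $K_m$-fibre before moving on, gives the invariant that all coloured fibres on one side of a component carry a common $m$-set and those on the other side its complement; consequently every presented vertex has at least one available colour, and no uncoloured vertex ever sees more than $m+(m-1)=2m-1$ colours on its neighbourhood, so Ben can never block. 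This fibre-by-fibre technique is in exactly the same spirit as the strategies the present paper does write out (compare the proofs of Theorem~\ref{colbound} and Theorem~\ref{comexpindexpg}). Two minor points you should keep: first, the statement as quoted is false for edgeless $G$ (there $\chi(G')=\chi_i(G')=m$), so your standing assumption that $G$ has an edge is genuinely needed and must be read as implicit in the cited theorem; second, your invariant should be phrased per connected component rather than globally over the fixed bipartition $(A,B)$, since Ben may choose different root sets $C_0$ in different components --- your own remark that non-adjacent components cannot block one another is precisely what makes this reformulation harmless.
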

 Corollary \ref{bipartproduct} is an immediate consequence of Theorem \ref{HwithKl} and Theorem \ref{comexpbi}.
\begin{cor}\label{bipartproduct}
Let $G$ be a bipartite graph and $H\in\mathcal{H}$ with $\chi_i(H)=\chi(H)$, then $\chi_i(G[H])=2 \chi(H)$.
\end{cor}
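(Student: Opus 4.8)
The plan is to derive Corollary \ref{bipartproduct} directly from the two named results, treating it purely as a specialization. Let $G$ be bipartite and let $H\in\mathcal{H}$ with $\chi_i(H)=\chi(H)=\ell$. First I would invoke Theorem \ref{HwithKl}: since $H\in\mathcal{H}$ and $\chi(H)=\chi_i(H)=\ell$, that theorem applies and gives the equality $\chi_i(G[H])=\chi_i(G[K_\ell])$. This reduces the problem entirely to computing $\chi_i(G[K_\ell])$, i.e.\ the indicated chromatic number of the lexicographic product of the bipartite graph $G$ with a complete graph on $\ell$ vertices.

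The second step is to identify $G[K_\ell]$ with a complete expansion and apply Theorem \ref{comexpbi}. As noted in the excerpt, $G[K_m]\cong\mathbb{K}[G](m,m,\ldots,m)$ is precisely the complete expansion of $G$ with each vertex blown up to $K_m$. Taking $m=\ell$, Theorem \ref{comexpbi} (which requires $G$ bipartite) yields $\chi_i(G[K_\ell])=2\ell=\chi(G[K_\ell])$. Combining this with the first step gives $\chi_i(G[H])=\chi_i(G[K_\ell])=2\ell$. Finally I would rewrite $2\ell$ as $2\chi(H)$ using $\chi(H)=\ell$, producing the claimed value $\chi_i(G[H])=2\chi(H)$.

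The argument is essentially a one-line chain of substitutions, so there is no genuine obstacle in the computation itself; the real content lives in the two theorems being quoted. The only point requiring a small amount of care is verifying that the hypotheses of Theorem \ref{HwithKl} are actually met, namely that $H$ lies in $\mathcal{H}$ \emph{and} satisfies $\chi(H)=\chi_i(H)$ so that the value $\ell$ is unambiguous; both are given as assumptions in the statement of the corollary, so this is immediate. I would also make explicit the isomorphism $G[K_\ell]\cong\mathbb{K}[G](\ell,\ldots,\ell)$ so that Theorem \ref{comexpbi} can be cited cleanly, since that theorem is phrased in the language of complete expansions rather than lexicographic products. With those two verifications in place, the equalities assemble directly into $\chi_i(G[H])=2\chi(H)$, completing the proof.
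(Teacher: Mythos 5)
Your proposal is correct and matches the paper's own argument exactly: the paper states that Corollary \ref{bipartproduct} is an immediate consequence of Theorem \ref{HwithKl} (reducing $\chi_i(G[H])$ to $\chi_i(G[K_\ell])$) and Theorem \ref{comexpbi} (computing $\chi_i(G[K_\ell])=2\ell$ via the identification $G[K_\ell]\cong\mathbb{K}[G](\ell,\ldots,\ell)$). Your additional care in checking the hypotheses of Theorem \ref{HwithKl} and making the complete-expansion isomorphism explicit is exactly the right bookkeeping, and nothing more is needed.
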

Now let us observe the relationship between a given graph and its complement in terms of their complete expansion and independent expansion.  
\begin{obs}\label{indcomexpcombi}
Let $G$ be any graph, then $\overline{\mathbb{I}[G]}\cong\mathbb{K}[\overline{G}]$.
\end{obs}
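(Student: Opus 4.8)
The plan is to exhibit an explicit isomorphism between the two graphs, namely the identity map on their common vertex set, and then verify that it preserves adjacency by a short case analysis. First I would fix the ordering $v_1,v_2,\ldots,v_n$ of $V(G)=V(\overline{G})$ and write $V_i$ for the block of $m_i$ vertices that replaces $v_i$ in both expansions; since $\mathbb{I}[G](m_1,\ldots,m_n)$ and $\mathbb{K}[\overline{G}](m_1,\ldots,m_n)$ are built on the same vertex set $\bigcup_{i=1}^n V_i$, the identity map $\phi$ is automatically a bijection, so only edge-preservation needs checking.

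Next I would compare the adjacency of two distinct vertices $x\in V_i$ and $y\in V_j$ in the two graphs by splitting on whether $i=j$. When $i=j$, the block $V_i$ carries no edges in $\mathbb{I}[G]$ (it is an independent set $\overline{K_{m_i}}$), so in $\overline{\mathbb{I}[G]}$ every pair inside $V_i$ becomes an edge; on the other side $V_i$ is the clique $K_{m_i}$ in $\mathbb{K}[\overline{G}]$, so the two coincide. When $i\neq j$, the vertices $x,y$ are adjacent in $\mathbb{I}[G]$ precisely when $v_iv_j\in E(G)$, hence adjacent in $\overline{\mathbb{I}[G]}$ precisely when $v_iv_j\notin E(G)$, that is, when $v_iv_j\in E(\overline{G})$; and this is exactly the condition under which $x$ and $y$ are adjacent across distinct blocks in $\mathbb{K}[\overline{G}]$. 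Thus in both cases $xy$ is an edge of $\overline{\mathbb{I}[G]}$ if and only if it is an edge of $\mathbb{K}[\overline{G}]$.

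Since $\phi$ is a bijection preserving both adjacency and non-adjacency, it is a graph isomorphism, giving $\overline{\mathbb{I}[G]}\cong\mathbb{K}[\overline{G}]$. This argument presents no real obstacle; the only point requiring care is the bookkeeping in the complement, namely that complementation turns each independent block into a clique while simultaneously replacing the ``join when adjacent in $G$'' rule between blocks by the ``join when adjacent in $\overline{G}$'' rule---two effects that together convert an independent expansion of $G$ into a complete expansion of $\overline{G}$.
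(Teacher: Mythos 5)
Your proof is correct: the paper states this as an observation with no proof at all, and your identity-map verification (blocks become cliques under complementation, and cross-block joins governed by $E(G)$ become joins governed by $E(\overline{G})$) is exactly the routine check the authors implicitly had in mind. Nothing is missing, and the only subtle point---that both expansions must use the same multiplicities $m_1,\ldots,m_n$---is handled correctly.
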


 Also one can observe that if $G$ is a bipartite graph, then $\mathbb{I}[G]$ is also a bipartite graph. Thus Theorem \ref{bipart} and Observation \ref{indcomexpcombi} will yield Corollary \ref{comexpcombi}.
\begin{cor}\label{comexpcombi}
If $G$ is the complete expansion of the complement of a bipartite graph, then $G$ is $k$-indicated colorable for all $k\geq\chi(G)$.
\end{cor}
Now Corollary \ref{comexpcombiproduct} follows from Theorem \ref{HwithKl} and Corollary \ref{comexpcombi}.
\begin{cor}\label{comexpcombiproduct}
If $G$ is the complement of a bipartite graph and $H\in\mathcal{H}$ with $\chi_i(H)=\chi(H)$, then $G[H]$ is $k$-indicated colorable for all $k\geq\chi(G[H])$.
\end{cor}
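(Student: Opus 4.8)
The plan is to derive Corollary \ref{comexpcombiproduct} by combining the three ingredients that have already been assembled in the paper, treating it as essentially a direct consequence rather than an independent argument. The statement asserts that if $G$ is the complement of a bipartite graph and $H\in\mathcal{H}$ with $\chi_i(H)=\chi(H)$, then $G[H]$ is $k$-indicated colorable for all $k\geq\chi(G[H])$. The key observation is that $\chi_i(H)=\chi(H)=\ell$ for some $\ell$, so Theorem \ref{HwithKl} applies directly and reduces the indicated colorability of $G[H]$ to that of $G[K_\ell]$: for every $k$, $G[H]$ is $k$-indicated colorable if and only if $G[K_\ell]$ is $k$-indicated colorable.

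The next step is to recognize that $G[K_\ell]\cong\mathbb{K}[G](\ell,\ell,\ldots,\ell)$ is precisely the complete expansion of $G$, as noted in the definitions section. Since $G$ is the complement of a bipartite graph, Corollary \ref{comexpcombi} tells us that this complete expansion $G[K_\ell]$ is $k$-indicated colorable for all $k\geq\chi(G[K_\ell])$. I would then combine this with the reduction from Theorem \ref{HwithKl} to conclude that $G[H]$ is $k$-indicated colorable for all $k\geq\chi(G[K_\ell])$.

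The final step is to reconcile the threshold $\chi(G[K_\ell])$ with the claimed threshold $\chi(G[H])$. Here I would invoke the Geller--Stahl result quoted in the introduction: since $\chi(H)=\ell$, we have $\chi(G[H])=\chi(G[K_\ell])$. This identifies the two chromatic thresholds and closes the gap, so that the statement ``for all $k\geq\chi(G[H])$'' follows verbatim. Thus the proof is a short chain: Theorem \ref{HwithKl} to pass from $H$ to $K_\ell$, Corollary \ref{comexpcombi} to handle the complete expansion, and the Geller--Stahl equality to match the chromatic numbers.

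I expect the only real subtlety to be the bookkeeping around the chromatic number equality $\chi(G[H])=\chi(G[K_\ell])$, which must be explicitly cited to justify that the indicated colorability holding for $k\geq\chi(G[K_\ell])$ is the same as holding for $k\geq\chi(G[H])$. There is no hard combinatorial obstacle, since all the strategic content is already packaged inside Theorem \ref{HwithKl} and Corollary \ref{comexpcombi}; the main task is simply to assemble these lemmas correctly and verify the hypotheses ($H\in\mathcal{H}$ with $\chi_i(H)=\chi(H)$, and $G$ being the complement of a bipartite graph so that its complete expansion is covered by Corollary \ref{comexpcombi}).
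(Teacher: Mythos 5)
Your proposal is correct and takes essentially the same route as the paper, which derives this corollary in a single line from Theorem \ref{HwithKl} and Corollary \ref{comexpcombi}. The only difference is that you make explicit the bookkeeping step $\chi(G[H])=\chi(G[K_\ell])$ via the Geller--Stahl result, which the paper leaves implicit.
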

Next, let us consider the complete expansion of tree-free graphs, cycle-free graphs and complement of path-free graphs.
\begin{prop}\label{treecyclefree}
Let $T$ be a tree on at least 3 vertices and let $G$ be a $T$-free graph or a $C_\ell$-free graph, $\ell\geq4$ (or) a $\overline{P_t}$-free graph, $t\geq4$. Then the graph $\mathbb{K}[G]$ is also  $T$-free or $C_\ell$-free (or) $\overline{P_t}$-free respectively.
\end{prop}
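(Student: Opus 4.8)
The plan is to prove the contrapositive in each of the three cases: show that if $\mathbb{K}[G]$ contains an induced copy of the forbidden graph $F$ (where $F$ is $T$, $C_\ell$, or $\overline{P_t}$), then $G$ itself already contains an induced copy of $F$. The underlying structural fact I would exploit is that $\mathbb{K}[G](m_1,\ldots,m_n)$ is obtained by blowing up each vertex $v_i$ of $G$ into a clique $K_{m_i}$, with complete join between blobs corresponding to adjacent vertices; consequently two vertices in the same blob are \emph{true twins} (adjacent, with identical closed neighbourhoods), and two vertices in distinct blobs are adjacent in $\mathbb{K}[G]$ exactly when the corresponding vertices are adjacent in $G$. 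I would set up a projection map $\pi$ sending each vertex of $\mathbb{K}[G]$ to the vertex of $G$ whose blob contains it, and analyze how $\pi$ behaves on an induced copy of $F$.

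The key observation for all three cases is that $F$ contains no pair of true twins. First I would record this as the engine of the argument: in a path $P_t$, a cycle $C_\ell$ with $\ell\ge 4$, or a tree $T$ on at least $3$ vertices, no two vertices have the same closed neighbourhood (for paths and cycles this is immediate from the degree/adjacency structure; for $\overline{P_t}$ and general trees I would verify it directly, using that $T$ being a tree on $\ge 3$ vertices forbids true twins, and that complementation preserves the twin relation so $\overline{P_t}$ is twin-free iff $P_t$ is). Given this, if $S\subseteq V(\mathbb{K}[G])$ induces a copy of $F$, then no two vertices of $S$ can lie in the same blob of $\mathbb{K}[G]$: two vertices sharing a blob are true twins in $\mathbb{K}[G]$, hence would be true twins in the induced subgraph $\langle S\rangle\cong F$, contradicting twin-freeness of $F$. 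Therefore $\pi$ restricted to $S$ is injective.

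Once $\pi|_S$ is injective, I would argue that $\pi(S)$ induces a copy of $F$ in $G$. This is where the definition of the blow-up pays off: for vertices $s_1,s_2\in S$ in distinct blobs, $s_1s_2\in E(\mathbb{K}[G])$ if and only if $\pi(s_1)\pi(s_2)\in E(G)$, so $\pi$ is an isomorphism from $\langle S\rangle$ to $\langle \pi(S)\rangle$, giving $\langle\pi(S)\rangle\cong F$ as an induced subgraph of $G$. This contradicts $G$ being $F$-free and completes the contrapositive, establishing all three statements simultaneously.

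The main obstacle I anticipate is not the combinatorics of the projection, which is clean, but the twin-freeness verification for the two less transparent forbidden families, namely general trees $T$ on at least $3$ vertices and complements $\overline{P_t}$ with $t\ge 4$. For trees I would note that true twins must be adjacent, hence joined by an edge of the tree, but then they would have to share all other neighbours, forcing a multi-edge or a triangle and contradicting acyclicity (a tree on $\ge 3$ vertices has at least one vertex of degree differing appropriately, so no adjacent pair can have identical closed neighbourhoods). For $\overline{P_t}$ I would use the twin-preservation under complementation: $u,v$ are true twins in $\overline{P_t}$ precisely when they are \emph{false twins} (non-adjacent, identical open neighbourhoods) in $P_t$, and a path on $t\ge 4$ vertices has no such pair. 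I expect these verifications to be short but to require the careful case distinctions that make the statement quantify over \emph{all} trees rather than a single fixed $F$.
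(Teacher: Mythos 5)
Your proof is correct, and its engine (two vertices in the same blob of $\mathbb{K}[G]$ are adjacent and have identical neighbourhoods outside the blob) is exactly the mechanism the paper uses for the first two cases: the paper's observation that any three vertices meeting some $V_i$ twice induce a $K_3$, hence an induced subgraph meeting a blob twice cannot be $C_\ell$ or $T$, is your true-twin argument stated concretely, followed by the same projection onto one vertex per blob. Where you genuinely diverge is the third case. The paper does not handle $\overline{P_t}$ inside $\mathbb{K}[G]$ at all: it passes to the complement, proves that the \emph{independent} expansion $\mathbb{I}[\overline{G}]$ of the $P_t$-free graph $\overline{G}$ is again $P_t$-free via a small case analysis (a connected $4$-vertex induced subgraph meeting an independent blob twice must be $C_4$, $K_{1,3}$ or $K_4-e$, none of which embeds in a path), and then complements back using $\mathbb{K}[G]\cong\overline{\mathbb{I}[\overline{G}]}$. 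You instead stay in $\mathbb{K}[G]$ and fold all three cases into a single lemma: complete expansion creates only true twins, and $T$, $C_\ell$ ($\ell\geq 4$) and $\overline{P_t}$ ($t\geq 4$) are true-twin-free, the last because complementation exchanges true and false twins and $P_t$ ($t\geq4$) has no false twins. Your route is more uniform and more reusable -- it isolates the exact property of the forbidden graph $F$ (absence of true twins) that makes $F$-freeness survive complete expansion, so it immediately covers further families; the paper's route for $\overline{P_t}$ is more ad hoc but has the side benefit of recording a fact about independent expansions ($P_t$-freeness is preserved) that the authors reuse elsewhere via their observation $\overline{\mathbb{I}[G]}\cong\mathbb{K}[\overline{G}]$. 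One small point worth tightening in your write-up: for the tree case, the clean statement is that true twins $u,v$ are adjacent, and since a tree on at least $3$ vertices is connected, one of them has a third neighbour $w$, which twin-ness forces to be common, creating a triangle and contradicting acyclicity; the phrase about ``a multi-edge'' can be dropped.
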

\begin{proof}
Let $G$ be a $C_\ell$-free  graph, for some $\ell\geq4$. 
Let $v_1,v_2,\ldots,v_n$ be the vertices of $G$ and let $V_1,V_2,\ldots, V_n$ be the set of all vertices in the complete expansion of $G$ corresponding to the vertices $v_1,v_2,\ldots,v_n$ respectively. 
Clearly $\langle V_i\rangle$, $1\leq i\leq n$ is a complete subgraph of $\mathbb{K}[G]$. 
Also if $v_iv_j\in E(G)$, then $[V_i,V_j]$  is complete in $\mathbb{K}[G]$ and any 3 vertices in $V_i\cup V_j$ will induces a $K_3$ in $\mathbb{K}[G]$. 
Let $H$ be any induced subgraph with at least 3 vertices in $\mathbb{K}[G]$. 
If $H$ contains at least 2 vertices in $V_i$ for some $i\in\{1,2,\ldots,n\}$, then $H\ncong C_\ell$. 
If $H$ contains at most one vertex in each $V_i$ then $H$ is isomorphic to  an induced subgraph of $G$. 
Thus $\mathbb{K}[G]$ is $C_\ell$-free.  
A similar proof works even for $T$-free graphs.

Finally, let us consider $G$ to be a $\overline{P_t}$-free graph, for some $t\geq4$. 
Then $\overline{G}$ will be a $P_t$-free graph.
Let us consider the independent expansion of $\overline{G}$.
Let $v_1,v_2,\ldots,v_n$ be the vertices of $\overline{G}$ and let $V_1,V_2,\ldots, V_n$ be the set of all vertices in the independent expansion of $\overline{G}$ corresponding to the vertices $v_1,v_2,\ldots,v_n$ respectively. 
Let $H$ be a connected induced subgraph with exactly 4 vertices in $\mathbb{I}[\overline{G}]$. 
If $H$ contains at least 2 vertices in $V_i$ where $i\in\{1,2,\ldots,n\}$, then $H\cong C_4$ or $K_{1,3}$ (or) $K_4-e$. 
So if $H$ is any induced subgraph with at least 4 vertices in $\mathbb{I}[\overline{G}]$ and if $H$ contains at least 2 vertices in $V_i$ where $i\in\{1,2,\ldots,n\}$, then $H\ncong P_t$. 
If $H$ contains at most one vertex in each $V_i$ then $H$ is isomorphic to an induced subgraph of $\overline{G}$. 
Thus $\mathbb{I}[\overline{G}]$ is $P_t$-free. 
Therefore by Observation \ref{indcomexpcombi}, $\mathbb{K}[G]\cong\overline{\mathbb{I}[\overline{G}]}$ and hence a $\overline{P_t}$-free graph.
\end{proof}
As a consequence of Theorem \ref{bipart} and Proposition \ref{treecyclefree}, we obtain Corollary \ref{chorp4}.
\begin{cor}\label{chorp4}
If $G$ is a chordal graph or a cograph (or) a $\{P_5, C_4\}$-free graph (or) a connected $\{P_6, C_5, K_{1,3},\overline{P_5}\}$-free graph which contains an induced $C_6$, then $\mathbb{K}[G]$ is $k$-indicated colorable for all $k\geq\chi(\mathbb{K}[G])$.
\end{cor}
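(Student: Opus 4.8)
The plan is to show that, in each of the four cases, the complete expansion $\mathbb{K}[G]$ lies in the very same family appearing in Theorem \ref{bipart}, and then to invoke that theorem directly. The key observation driving everything is that each of these four families is defined by forbidding induced subgraphs that are precisely of the three types handled by Proposition \ref{treecyclefree}: trees on at least three vertices, cycles $C_\ell$ with $\ell\geq4$, and complements of paths $\overline{P_t}$ with $t\geq4$.

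First I would dispose of the four cases one at a time. A chordal graph is exactly one with no induced $C_\ell$ for any $\ell\geq4$; applying the cycle case of Proposition \ref{treecyclefree} for every $\ell\geq4$ shows that $\mathbb{K}[G]$ is again $C_\ell$-free for all $\ell\geq4$, hence chordal. A cograph is a $P_4$-free graph, and since $P_4$ is a tree on four vertices, the tree case of Proposition \ref{treecyclefree} yields that $\mathbb{K}[G]$ is $P_4$-free, i.e.\ a cograph. For $\{P_5,C_4\}$-free graphs I would apply the tree case to $P_5$ and the cycle case to $C_4$, concluding that $\mathbb{K}[G]$ is $\{P_5,C_4\}$-free. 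Finally, for the fourth family I would apply the tree case to $P_6$ and to the claw $K_{1,3}$, the cycle case to $C_5$, and the complement-of-path case to $\overline{P_5}$ (here $t=5\geq4$), obtaining that $\mathbb{K}[G]$ is $\{P_6,C_5,K_{1,3},\overline{P_5}\}$-free.

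The step that Proposition \ref{treecyclefree} does not supply, and which I expect to be the only genuine point to verify, is that the two \emph{positive} structural conditions in the fourth family are inherited by $\mathbb{K}[G]$. For connectivity, I would note that choosing one representative vertex from each block $V_i$ produces an induced copy of $G$, and that every remaining vertex of $\mathbb{K}[G]$ shares a complete block with some representative; hence the connectivity of $G$ lifts to $\mathbb{K}[G]$. For the induced $C_6$, the very same embedding of $G$ as an induced subgraph (take exactly one vertex from each $V_i$, whose induced adjacencies reproduce those of $G$) shows that an induced $C_6$ in $G$ persists as an induced $C_6$ in $\mathbb{K}[G]$.

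Combining these observations, in every case $\mathbb{K}[G]$ belongs to the corresponding family of $\mathcal{F}$ in Theorem \ref{bipart}, and therefore $\mathbb{K}[G]$ is $k$-indicated colorable for all $k\geq\chi(\mathbb{K}[G])$, as required. The only real subtlety is bookkeeping: keeping track of which forbidden subgraph falls under which of the three cases of Proposition \ref{treecyclefree}, together with the separate verification that connectivity and the induced $C_6$ survive the expansion. Everything else is a direct appeal to the two quoted results.
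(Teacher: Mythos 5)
Your proposal is correct and follows exactly the paper's route: the paper derives Corollary \ref{chorp4} precisely by combining Proposition \ref{treecyclefree} (applied to each forbidden tree, cycle, and path-complement) with Theorem \ref{bipart}. Your explicit verification that connectivity and the induced $C_6$ survive the expansion is a detail the paper leaves implicit, but it is part of the same argument rather than a different one.
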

By using Theorem \ref{HwithKl} and Corollary \ref{chorp4}, we have Corollary \ref{chorp4product}.
\begin{cor}\label{chorp4product}
If $G$ is a chordal graph or a cograph (or) a $\{P_5, C_4\}$-free graph (or) a connected $\{P_6, C_5, K_{1,3},\overline{P_5}\}$-free graph which contains an induced $C_6$ and $H\in\mathcal{H}$ with $\chi_i(H)=\chi(H)$, then $G[H]$ is $k$-indicated colorable for all $k\geq\chi(G[H])$.
\end{cor}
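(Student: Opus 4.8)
The plan is to chain together three ingredients that the paper has already set up: the equivalence supplied by Theorem \ref{HwithKl}, the complete-expansion result of Corollary \ref{chorp4}, and the classical identity $\chi(G[H])=\chi(G[K_\ell])$ of Geller and Stahl recalled in the introduction. The whole point is that the hypotheses have been arranged so that each link in this chain applies verbatim, so no fresh combinatorial argument about indicated coloring is required here.

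First I would set $\ell=\chi(H)=\chi_i(H)$. Since $H\in\mathcal{H}$ and $\chi(H)=\chi_i(H)=\ell$, Theorem \ref{HwithKl} tells me that $G[H]$ is $k$-indicated colorable if and only if $G[K_\ell]$ is $k$-indicated colorable, for every $k$. Hence it suffices to prove that $G[K_\ell]$ is $k$-indicated colorable for all $k\geq\chi(G[H])$. Next I would observe that $G[K_\ell]\cong\mathbb{K}[G](\ell,\ell,\ldots,\ell)$ is exactly a complete expansion of $G$, as noted immediately after the definition of complete expansions. Because $G$ lies in one of the four listed families, Corollary \ref{chorp4} applies directly to $\mathbb{K}[G]=G[K_\ell]$ and shows that $G[K_\ell]$ is $k$-indicated colorable for all $k\geq\chi(G[K_\ell])$. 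Finally, the Geller--Stahl identity $\chi(G[H])=\chi(G[K_\ell])$ (using $\chi(H)=\ell$) shows that the threshold $\chi(G[K_\ell])$ coincides with $\chi(G[H])$. Combining these two facts with the equivalence from the first step yields the claim.

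The main obstacle, insofar as there is one, is bookkeeping rather than mathematics: I must make sure that the $\ell$ appearing in Theorem \ref{HwithKl} (namely $\chi_i(H)=\ell$), the multiplicity $\ell$ of the complete expansion $G[K_\ell]$ in Corollary \ref{chorp4}, and the $\ell$ in the Geller--Stahl identity (the chromatic number $\chi(H)=\ell$) are all the same value, which is precisely what the hypothesis $\chi(H)=\chi_i(H)$ guarantees. The one genuine condition to track is that $H\in\mathcal{H}$, since Theorem \ref{HwithKl} requires Ann's winning strategy on $H$ to remain extendable once Ben has used $\chi_i(H)$ colors; this is exactly why the corollary is stated for $H\in\mathcal{H}$ rather than for an arbitrary graph $H$ with $\chi(H)=\chi_i(H)$.
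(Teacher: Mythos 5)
Your proof is correct and follows essentially the same route as the paper, which derives the corollary by combining Theorem \ref{HwithKl} with Corollary \ref{chorp4}. The only addition is that you make explicit the threshold identification $\chi(G[H])=\chi(G[K_\ell])$ via the Geller--Stahl identity, a detail the paper leaves implicit but which is indeed needed to align the two bounds.
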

The structural characterisation of $Paw$-free graphs and $\{P_5,K_3\}$-free graphs have been studied in \cite{ola} and \cite{sum}.
\begin{thm}[\cite{ola}] \label{paw}
Let $G$ be a connected graph. Then $G$ is paw-free if and only if $G$ is $K_3$-free or complete multipartite.
\end{thm}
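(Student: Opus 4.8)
The plan is to prove both implications, using the standard characterisation that a graph is complete multipartite precisely when it contains no induced $K_1\cup K_2$ (equivalently, when non-adjacency is an equivalence relation on its vertex set). The reverse implication is easy and does not need connectivity: if $G$ is $K_3$-free it contains no paw, since the paw has a triangle; and if $G$ is complete multipartite, an induced paw with triangle $\{a,b,c\}$ and pendant $d$ adjacent only to $a$ would force $d$ into the same part as both $b$ and $c$ (because $d\not\sim b$ and $d\not\sim c$), contradicting $b\sim c$. So the work is all in the forward direction.

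For the converse I would assume $G$ is connected, paw-free, and contains a triangle (otherwise $G$ is $K_3$-free and we are done), and show $G$ is complete multipartite. The engine of the argument is the local observation, call it $(\ast)$: if $T$ is any triangle and $v\notin T$, then $v$ has $0$, $2$, or $3$ neighbours in $T$, never exactly one, since a single neighbour would make $T\cup\{v\}$ an induced paw. Fixing a maximum clique $K=\{a_1,\dots,a_\omega\}$ (so $\omega=\omega(G)\ge 3$), I would first establish a dichotomy: every $v\notin K$ is adjacent to all but exactly one vertex of $K$, or else to none of $K$. Indeed $v$ misses some $a_i$ by maximality, and applying $(\ast)$ to the triangles $\{a_i,a_p,a_q\}\subseteq K$ shows $v$ is joined to all or to none of $K\setminus\{a_i\}$.

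The main obstacle is ruling out the ``none'' alternative, i.e.\ a vertex non-adjacent to all of $K$. Here I would invoke connectivity: take a shortest path from such a vertex to $K$ and let $z$ be the vertex on it at distance exactly $2$ from $K$ (which exists, as the vertex is non-adjacent to all of $K$), with path-neighbour $w$ at distance $1$. By the dichotomy $w$ is adjacent to at least two vertices $a_p,a_q$ of $K$, so $\{w,a_p,a_q\}$ is a triangle; but $z\sim w$ while $z\not\sim a_p,a_q$, so $z$ has exactly one neighbour in this triangle, contradicting $(\ast)$. Hence every vertex outside $K$ misses exactly one vertex of $K$, and I can define parts $P_i=\{a_i\}\cup\{v\notin K:\,v\text{ misses only }a_i\}$, which partition $V(G)$.

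It then remains to verify that each $P_i$ is independent and that all edges between distinct parts are present, both being short applications of $(\ast)$ (the sub-cases in which one endpoint lies in $K$ being immediate from the definition of the parts). For independence, if $u,v\in P_i$ were adjacent then $\{u,v,a_j\}$ with any $j\ne i$ is a triangle in which $a_i$ sees only $a_j$, a paw; and for a missing cross edge $u\in P_i\setminus K$, $v\in P_j\setminus K$ with $i\ne j$, choosing $a_k$ with $k\ne i,j$ (possible since $\omega\ge 3$) makes $\{u,a_j,a_k\}$ a triangle in which $v$ would have exactly one neighbour, again a paw. This shows $G$ is complete multipartite and completes the proof. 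I expect the distance-two paw argument eliminating the ``none of $K$'' case to be the crux; the rest is direct, if careful, bookkeeping with $(\ast)$.
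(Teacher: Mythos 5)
Your proof is correct. Note, however, that the paper itself offers no proof of this statement: it is quoted verbatim from Olariu's 1988 paper \cite{ola} as a known structural characterisation, so there is no in-paper argument to compare yours against. Judged on its own, your reconstruction is sound and complete: the reverse direction is handled correctly (the complete multipartite case via the fact that non-adjacency is an equivalence relation), and the forward direction is where you do the real work. Your observation $(\ast)$ (a vertex outside a triangle has $0$, $2$ or $3$ neighbours on it) is exactly the right local tool; the dichotomy for vertices outside a maximum clique $K$ with $\omega\ge 3$ follows cleanly from it, and your use of connectivity---taking a vertex at distance exactly $2$ from $K$ together with its path-neighbour at distance $1$, which by the dichotomy lies on a triangle through $K$---correctly eliminates the ``adjacent to none of $K$'' alternative; this is indeed the only place where connectivity (and the existence of a triangle) is genuinely needed, as the disconnected graph $K_3\cup K_1$ shows. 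The final bookkeeping (each part $P_i$ independent, all cross edges present, using $\omega\ge3$ to pick a third clique vertex $a_k$) is accurate, including the degenerate sub-cases where an endpoint lies in $K$. Whether this matches Olariu's original argument is immaterial for the paper, which uses the theorem only as a black box to reduce $\{P_5,\mathrm{Paw}\}$-free graphs to the triangle-free and complete multipartite cases.
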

\begin{thm}[\cite{sum}] \label{p5k3}
Every component of a $\{P_5,K_3\}$-free graph is either bipartite or $\mathbb{I}[C_5]$.
\end{thm}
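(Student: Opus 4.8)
The plan is to fix a connected component $C$ of a $\{P_5,K_3\}$-free graph $G$; since being $\{P_5,K_3\}$-free is hereditary, $C$ is itself $\{P_5,K_3\}$-free. If $C$ is bipartite we are in the first alternative and there is nothing to prove, so I would assume $C$ is not bipartite. Then $C$ contains an odd cycle, and I would take a shortest one, say of length $2k+1$. A shortest odd cycle is chordless (a chord would split off a shorter odd cycle), hence induced, and since $C$ is triangle-free its length is at least $5$. If $2k+1\geq 7$, then five consecutive vertices of this chordless cycle would induce a $P_5$ (the two end vertices of that path are non-consecutive on a cycle of length at least $7$, hence non-adjacent), contradicting $P_5$-freeness. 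Thus the shortest odd cycle is an induced $C_5$; I fix it and label its vertices $v_1v_2v_3v_4v_5v_1$, with indices read modulo $5$.

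Next I would classify every vertex $u$ of $C$ according to $N(u)\cap\{v_1,\dots,v_5\}$. Because $C$ is triangle-free this set is independent in $C_5$, so it is empty, a single vertex, or a non-adjacent pair $\{v_{i-1},v_{i+1}\}$. A single neighbour $v_i$ is impossible, since then $u,v_i,v_{i+1},v_{i+2},v_{i+3}$ induce a $P_5$. An empty intersection is also impossible: if some vertex lay at distance at least $2$ from the $C_5$, then on a shortest path to the cycle the last off-cycle vertex $x$ would be adjacent to a vertex $u$ with $N(u)\cap\{v_1,\dots,v_5\}=\{v_{i-1},v_{i+1}\}$ while $x$ itself has no neighbour on the cycle, and then $x,u,v_{i+1},v_{i+2},v_{i+3}$ induce a $P_5$. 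Hence every vertex has neighbourhood exactly some non-adjacent pair $\{v_{i-1},v_{i+1}\}$, and I would set $V_i=\{u\in V(C):N(u)\cap\{v_1,\dots,v_5\}=\{v_{i-1},v_{i+1}\}\}$. The five pairs are pairwise distinct, so the sets $V_i$ partition $V(C)$, and each contains $v_i$ and is therefore non-empty.

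Finally I would verify that $C\cong\mathbb{I}[C_5]$ with parts $V_1,\dots,V_5$. Each $V_i$ is independent, for two adjacent vertices of $V_i$ together with their common neighbour $v_{i-1}$ would form a $K_3$; likewise there is no edge between $V_i$ and $V_{i+2}$, since both classes are adjacent to $v_{i+1}$ and such an edge would again yield a $K_3$. The decisive step is to show that $V_i$ is complete to $V_{i+1}$: given $a\in V_i$ and $b\in V_{i+1}$, if $a\not\sim b$ then, taking $i=1$, the vertices $a,v_5,v_4,v_3,b$ induce a $P_5$, using that $a$ is adjacent only to $v_5,v_2$ on the cycle and $b$ only to $v_1,v_3$, together with the assumed non-edge $ab$; this contradiction forces $a\sim b$. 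Rotating $i$ makes every pair of consecutive classes complete and every pair of non-consecutive classes edgeless, which is exactly the adjacency pattern of an independent expansion of $C_5$. I expect the completeness of consecutive classes to be the main obstacle, as it is the only place where an assumed non-edge must be converted into an explicit induced $P_5$; all the remaining steps are short triangle- or $P_5$-exclusion arguments.
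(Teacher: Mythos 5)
Your proof is correct. Note, however, that the paper itself offers no proof of this statement: it is quoted as Theorem \ref{p5k3} directly from Sumner's paper \cite{sum}, so there is no internal argument to compare yours against. What you have written is a complete, self-contained derivation of that cited result, and it follows the standard route: a shortest odd cycle in a non-bipartite component must be an induced $C_5$ (triangle-freeness rules out length $3$, $P_5$-freeness rules out length $\geq 7$); triangle-freeness forces every vertex's trace on the $C_5$ to be an independent set, $P_5$-freeness kills the singleton and empty cases (the empty case via a vertex at distance two together with its attachment vertex); and the resulting five classes $V_1,\dots,V_5$ are independent, anticomplete between non-consecutive classes (else a $K_3$ through the common cycle neighbour), and complete between consecutive classes (else the explicit induced $P_5$ you exhibit, e.g. $a,v_5,v_4,v_3,b$). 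All adjacency checks you make are valid, including the distinctness of the vertices involved. The only blemish is a wording slip in the empty-intersection case: the vertex you call the ``last off-cycle vertex'' $x$ is really the predecessor of the attachment vertex $u$ on the shortest path (the last off-cycle vertex is $u$ itself, which \emph{is} adjacent to the cycle); the intended pair $x \sim u$ with $u$ attached and $x$ unattached is clear from context, so this does not affect correctness.
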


For studying the indicated coloring of the lexicographic product of $\{P_5,K_3\}$-free or $\{P_5,Paw\}$-free graphs with a graph in $\mathcal{H}$ with indicated chromatic number equal to its chromatic number, let us first consider the indicated coloring of the complete expansion of the independent expansion of a graph $G$.
\begin{thm}\label{comexpindexpg}
  $\mathbb{K}[\mathbb{I}[G]]$ is $k$-indicated colorable if $\mathbb{K}[G]$ is $k$-indicated colorable.
\end{thm}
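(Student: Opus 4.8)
The plan is to run a strategy-simulation argument, mirroring the mechanism used in the second half of the proof of Theorem \ref{HwithKl}. First I would fix notation. Write $V(G)=\{v_1,\dots,v_n\}$ and $\mathbb{I}[G]=\mathbb{I}[G](m_1,\dots,m_n)$, so that $v_i$ is replaced by an independent set of $m_i$ vertices; in $\mathbb{K}[\mathbb{I}[G]]$ each of these is in turn replaced by a clique, yielding cliques $A_{i,1},\dots,A_{i,m_i}$, and I put $B_i=\bigcup_{s=1}^{m_i}A_{i,s}$. The two structural facts I would record at the outset are: (a) for $s\neq t$ the cliques $A_{i,s}$ and $A_{i,t}$ are \emph{non-adjacent}, since they arise from the same independent set of $\mathbb{I}[G]$; and (b) every clique $A_{i,s}$ has the same neighbourhood outside $B_i$, namely $\bigcup_{v_iv_j\in E(G)}B_j$. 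Consequently $\mathbb{K}[\mathbb{I}[G]]$ is exactly the complete expansion of $G$ whose block for $v_i$ has size $|B_i|$, \emph{except} that the edges joining distinct cliques inside a block have been deleted. I would take the reference graph to be $\mathbb{K}[G]=\mathbb{K}[G](|B_1|,\dots,|B_n|)$, whose block for $v_i$ is a single clique $C_i$ with $|C_i|=|B_i|$, and let $st$ be Ann's winning $k$-strategy on it supplied by the hypothesis.

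Then Ann plays the real game on $\mathbb{K}[\mathbb{I}[G]]$ while privately running $st$ on $\mathbb{K}[G]$, maintaining the invariant that, for every $i$, the set of colours appearing on $B_i$ is contained in the set of colours appearing on $C_i$. Whenever $st$ calls for the next imagined vertex $x\in C_i$, Ann presents real vertices of $B_i$, going clique by clique, until Ben first assigns a colour $c$ that is new to $B_i$; she then feeds $c$ to $st$ as Ben's colour on $x$ (if $B_i$ becomes fully coloured first, she lets $st$ colour $x$ with any legal colour, a ``phantom'' move). The key legality check is that such a $c$ is admissible on $x$ in the imagined game: since $c$ was just legally used inside $B_i$, it is absent from $\bigcup_{v_iv_j\in E(G)}B_j$, and, being new to $B_i$ and hence to $C_i$, it is free for the clique vertex $x$. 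A short counting argument using $|C_i|=|B_i|$ shows that the $|B_i|$ presentations of $C_i$-vertices dictated by $st$ suffice to exhaust all real vertices of $B_i$, so Ann does present every vertex of $\mathbb{K}[\mathbb{I}[G]]$.

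To see that no real vertex is ever blocked, I would invoke the invariant: for $y\in A_{i,s}$ the colours forbidding $y$ all lie on $(A_{i,s}\setminus\{y\})\cup\bigcup_{v_iv_j\in E(G)}B_j$, whose colour set is contained in that of $C_i\cup\bigcup_{v_iv_j\in E(G)}C_j$, i.e.\ in the set of colours forbidding the corresponding clique vertex in the imagined game. Since $st$ guarantees that an imagined clique vertex always has a free colour among the $k$ colours, so does $y$; hence Ben cannot create a block vertex and $\mathbb{K}[\mathbb{I}[G]]$ is $k$-indicated colorable.

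The main obstacle, and the reason a naive ``copy Ben's moves one-for-one'' simulation fails, is precisely fact (a): in the real game Ben may colour vertices lying in different cliques $A_{i,s}$, $A_{i,t}$ of the same block with the \emph{same} colour, which is impossible inside the clique $C_i$ of $\mathbb{K}[G]$. The device of advancing $st$ only when a genuinely new colour appears on a block, rather than once per presented vertex, together with the choice $|C_i|=|B_i|$ so that $C_i$ is large enough to absorb up to $|B_i|$ distinct colours, is what circumvents this. Verifying that this bookkeeping never runs out of imagined clique-vertices and never forces an illegal imagined colour is where the care is required.
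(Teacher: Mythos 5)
Your simulation has a genuine gap at exactly the step you flagged as ``the key legality check.'' When Ben's colour $c$ is new to $B_i$ and you feed it to the imagined vertex $x\in C_i$, you verify that $c$ is absent from the \emph{real} neighbouring blocks $\bigcup_{v_iv_j\in E(G)}B_j$, but what legality in the imagined game requires is absence from the \emph{imagined} neighbouring blocks $\bigcup_{v_iv_j\in E(G)}C_j$ — and your invariant, colours$(B_j)\subseteq$ colours$(C_j)$, points in the wrong direction for this. Phantom moves create precisely this discrepancy: a phantom colour sits on some $C_j$ but on no real vertex, so Ben may later legally use it on a vertex of an adjacent block $B_i$, where it is new to $B_i$, and your rule then forces an illegal imagined move. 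Concretely, take $G=K_2$, $\mathbb{I}[G](1,2)$ and all cliques trivial, so the real graph is $P_3$ with $B_1=\{a\}$, $B_2=\{b_1,b_2\}$, the reference graph is $K_3$ with $C_1=\{x\}$, $C_2=\{y_1,y_2\}$, and $k=3$. Let $st$ present $y_1,y_2,x$ in that order. Ben colours $b_1$ with $1$ (feed $y_1\leftarrow 1$), then colours $b_2$ with $1$ (legal, since $b_1b_2\notin E$; not new, and $B_2$ is exhausted, so you make a phantom move, say $y_2\leftarrow 2$), then colours $a$ with $2$ (legal in $P_3$, and new to $B_1$) — but feeding $2$ to $x$ is illegal because $y_2$ already carries $2$. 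At this point the strategy is undefined and all guarantees drawn from $st$ lapse. The failure is structural, not an oversight in bookkeeping: since $|C_i|=|B_i|$, the imagined game must place $|B_i|$ distinct colours on block $i$, while Ben can colour $B_i$ with as few as its maximum clique size, so phantoms cannot be avoided; and ``skipping'' unfeedable colours instead breaks your invariant (forbidden colours of a real vertex then get dominated only by colours of imagined blocks up to distance two, not by the forbidden colours of a single imagined vertex). A secondary issue: with blocks of size $|B_i|$ your reference expansion can have chromatic number far above $\chi(\mathbb{K}[\mathbb{I}[G]])$, which would hurt the paper's applications even if the simulation were sound.

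The paper's proof avoids simulation entirely, and is the natural repair. For each $i$ let $H_i$ be a clique of maximum size among $A_{i,1},\ldots,A_{i,m_i}$; then $G'=\langle\bigcup_{i=1}^{n}H_i\rangle$ is an \emph{induced} complete expansion of $G$ inside $\mathbb{K}[\mathbb{I}[G]]$, and moreover $\chi(G')=\chi(\mathbb{K}[\mathbb{I}[G]])$. Ann first presents the vertices of $G'$ following the winning $k$-strategy that the hypothesis provides for this complete expansion — these are genuine moves on an induced subgraph played before anything else is coloured, so no legality or correspondence issues arise — and then presents all remaining vertices in any order. Each leftover clique $A_{i,s}$ is non-adjacent to $H_i$, has exactly the same neighbourhood outside $B_i$ as $H_i$, and satisfies $|A_{i,s}|\le|H_i|$; since the $|H_i|$ distinct colours on $H_i$ can never appear on that common outside neighbourhood, every uncoloured vertex of $A_{i,s}$ always has an available colour, so Ben can never block. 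If you rewrite your argument around this ``play the extremal induced sub-expansion first, then mop up'' idea, both the legality gap and the chromatic-number mismatch disappear.
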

\begin{proof}
Let $G$ be a graph with $V(G)=\{v_1,v_2,\ldots,v_n\}$. Let  $k$ be a positive integer such that any complete expansion of $G$ is $k$-indicated colorable.
For $1\leq i\leq n$, let $V_i$ denote the independent expansion of the vertex $v_i$ in $\mathbb{I}[G]$. 
For $1\leq i\leq n$, $1\leq j\leq |V_i|$, let $H_{ij}$ be the complete subgraphs replacing the vertex $v_{ij}$ of $V_i$ in $\mathbb{K}[\mathbb{I}[G]]$ and let $H_i$ denote the $H_{ij}, 1\leq j \leq |V_i|$, with the maximum cardinality. 
Clearly 
 $\langle\cup_{i=1}^{n}H_i\rangle$ is a complete expansion of $G$. Let us denote this subgraph by $G'$. 
Also, it is not difficult to observe that  $\chi(\mathbb{K}[\mathbb{I}[G]])=\chi(G')$.
By our assumption, $G'$ is $k$-indicated colorable and hence Ann has a winning strategy for  $G'$ using $k$ colors.
Let the color set be $\{1,2,\ldots, k\}$.
Let Ann present the vertices of $G'$ according to this winning strategy and then presents the remaining vertices of $\mathbb{K}[\mathbb{I}[G]]$ in any order. 
For $1\leq i\leq n$, $V_i$ forms an  independent set and hence for $1\leq j_1< j_2\leq |V_i|$, no vertex in $H_{ij_1}$ is adjacent to any vertex in $H_{ij_2}$. 
Also for $1 \leq j\leq |V_i|$, the neighbors outside the complete expansion of $V_i$ for any two vertices in the complete expansion of $V_i$ is the same
and hence the colors given to the vertices of $H_i$ will be available for the vertices of $H_{ij}$.  
Thus $\mathbb{K}[\mathbb{I}[G]]$ has an indicated coloring using $k$ colors.
\end{proof}
By Theorem \ref{bipart} and Theorem \ref{comexpindexpg}, we see that $\mathbb{K}[\mathbb{I}[C_5]]$ are $k$-indicated colorable for every $k\geq\mathbb{K}[\mathbb{I}[C_5]]$. Hence by using Theorem \ref{union}, Theorem \ref{comexpbi}, Theorem \ref{p5k3} and Theorem \ref{comexpindexpg}, we get Corollary \ref{comexpp5k3}.
\begin{cor}\label{comexpp5k3}
  Let $G$ be a $\{P_5,K_3\}$-free graph and $H\cong\mathbb{K}[G](m,m,\ldots,m)$ for some $m\geq1$. Then $\chi_i(H)=\chi(H)$.
\end{cor}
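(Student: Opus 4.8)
The plan is to reduce $H \cong \mathbb{K}[G](m,m,\ldots,m) \cong G[K_m]$ to its connected components and then reassemble the pieces via Theorem \ref{union}. First I would write $G = G_1 \cup G_2 \cup \cdots \cup G_t$ as the disjoint union of its connected components. Since distinct components share no edges, the complete expansion creates no edges between the blown-up copies of different components; hence $H$ splits as $H = H_1 \cup \cdots \cup H_t$, where $H_s = \mathbb{K}[G_s](m,\ldots,m)$, and in particular $\chi(H) = \max_s \chi(H_s)$.

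Next I would invoke the structural Theorem \ref{p5k3}: each component $G_s$ is either bipartite or isomorphic to $\mathbb{I}[C_5]$. The core of the argument is to show that, for each of these two types, $H_s$ is $k$-indicated colorable for every $k \geq \chi(H_s)$, with $\chi_i(H_s) = \chi(H_s)$. For a bipartite component this is Theorem \ref{comexpbi}, which gives $\chi_i(H_s) = 2m = \chi(H_s)$; since its winning strategy is independent of $k$, colorability extends to all $k \geq 2m$. For a component $G_s \cong \mathbb{I}[C_5]$, I would note that $H_s = \mathbb{K}[\mathbb{I}[C_5]](m,\ldots,m)$ and apply Theorem \ref{comexpindexpg}, which transfers $k$-indicated colorability from the associated complete expansion $\mathbb{K}[C_5]$ to $\mathbb{K}[\mathbb{I}[C_5]]$. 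As $\mathbb{K}[C_5](m_1,\ldots,m_5) \in \mathcal{F}$, Theorem \ref{bipart} guarantees $\mathbb{K}[C_5]$ is $k$-indicated colorable for all $k \geq \chi$, and hence so is $H_s$, with $\chi_i(H_s) = \chi(H_s)$.

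Finally, with both component types shown to be $k$-indicated colorable for all $k \geq \chi_i$, I would apply Theorem \ref{union}, iterated over the $t$ components (its conclusion supplies precisely the hypothesis needed for the next merge), to deduce $\chi_i(H) = \max_s \chi_i(H_s) = \max_s \chi(H_s) = \chi(H)$, which is the desired equality.

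The step I expect to be the main obstacle is not the bookkeeping but securing the stronger ``for all $k$'' colorability of each $H_s$. Because indicated colorability is not monotone in $k$ (as exhibited by Grzesik's example), one cannot simply take the maximum of the $\chi_i(H_s)$ over a disjoint union; Theorem \ref{union} is applicable only when each summand is robustly colorable for every $k \geq \chi_i$. Thus the real content lies in verifying this robustness for both the bipartite expansions (through the $k$-independence of the strategy in Theorem \ref{comexpbi}) and the $\mathbb{I}[C_5]$ expansions (through the chain Theorem \ref{comexpindexpg} followed by Theorem \ref{bipart}); once these are in hand, the decomposition and the final gluing are routine.
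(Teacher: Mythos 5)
Your proposal is correct and follows essentially the same route as the paper: the paper's own justification invokes exactly Theorem \ref{p5k3} to split into bipartite and $\mathbb{I}[C_5]$ components, Theorem \ref{comexpbi} for the bipartite expansions, Theorems \ref{comexpindexpg} and \ref{bipart} (via $\mathbb{K}[C_5]\in\mathcal{F}$) for the $\mathbb{K}[\mathbb{I}[C_5]]$ pieces, and Theorem \ref{union} to reassemble. Your explicit remark that the bipartite-expansion strategy is independent of $k$ (needed to legitimize applying Theorem \ref{union}) matches the paper's own blanket observation preceding Theorem \ref{fch}, so there is no substantive difference between the two arguments.
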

By using Theorem \ref{HwithKl} and Corollary \ref{comexpp5k3}, we have Corollary \ref{comexpp5k3product}.
\begin{cor}\label{comexpp5k3product}
  Let $G$ be a $\{P_5,K_3\}$-free graph and $H\in\mathcal{H}$ with $\chi_i(H)=\chi(H)$, then $\chi_i(G[H])=\chi(G[H])$.
\end{cor}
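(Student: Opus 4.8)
The plan is to prove this purely by assembling results already in hand, chaining three equalities through the complete-graph factor $K_\ell$. The guiding idea is that Theorem \ref{HwithKl} lets me replace the (arbitrary) $\mathcal{H}$-factor $H$ by a complete graph of the right size when computing the indicated chromatic number, Corollary \ref{comexpp5k3} lets me compute the indicated chromatic number of the resulting complete expansion of a $\{P_5,K_3\}$-free graph, and the classical Geller--Stahl identity recalled in the introduction lets me match the ordinary chromatic numbers back up. So the whole argument is a bookkeeping step rather than a fresh construction.

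Concretely, I would first set $\ell:=\chi(H)=\chi_i(H)$, which is legitimate since $H\in\mathcal{H}$ is assumed to satisfy $\chi_i(H)=\chi(H)$. Because $H\in\mathcal{H}$ with $\chi(H)=\chi_i(H)=\ell$, Theorem \ref{HwithKl} applies verbatim and yields $\chi_i(G[H])=\chi_i(G[K_\ell])$; this is the step that transfers the question from the complicated factor $H$ to the complete graph $K_\ell$. Next I would observe that $G[K_\ell]\cong\mathbb{K}[G](\ell,\ell,\ldots,\ell)$ is exactly a complete expansion of $G$ with all parts of equal size $m=\ell$, and since $G$ is $\{P_5,K_3\}$-free, Corollary \ref{comexpp5k3} gives $\chi_i(G[K_\ell])=\chi(G[K_\ell])$. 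Finally, since $\chi(H)=\ell$, the Geller--Stahl identity $\chi(G[H])=\chi(G[K_\ell])$ supplies the last link. Stringing these together gives $\chi_i(G[H])=\chi_i(G[K_\ell])=\chi(G[K_\ell])=\chi(G[H])$, as required.

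There is essentially no genuine obstacle here; the only thing requiring care is confirming that the hypotheses of each invoked result are actually met. Specifically, I must check that the membership $H\in\mathcal{H}$ together with $\chi_i(H)=\chi(H)=\ell$ is exactly the hypothesis of Theorem \ref{HwithKl}, that the identification $G[K_\ell]\cong\mathbb{K}[G](\ell,\ldots,\ell)$ places $G[K_\ell]$ in the scope of Corollary \ref{comexpp5k3} (with $m=\ell$), and that $\chi(H)=\ell$ is precisely what the Geller--Stahl identity needs to equate $\chi(G[K_\ell])$ with $\chi(G[H])$. Once these three compatibility checks are recorded, the chain of equalities closes and the corollary follows immediately.
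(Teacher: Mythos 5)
Your proposal is correct and is essentially the paper's own argument: the paper obtains this corollary precisely by combining Theorem \ref{HwithKl} (to replace $H$ by $K_\ell$ with $\ell=\chi(H)=\chi_i(H)$) with Corollary \ref{comexpp5k3} (applied to $G[K_\ell]\cong\mathbb{K}[G](\ell,\ldots,\ell)$), with the Geller--Stahl identity $\chi(G[H])=\chi(G[K_\ell])$ recalled in the introduction closing the chain exactly as you do. Your write-up merely makes explicit the bookkeeping that the paper leaves implicit in its one-line citation.
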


Next, let us consider the indicated coloring of the complete expansion of complete multipartite graphs. 
\begin{thm}\label{comexpcommul}
 Let $G$ be a complete multipartite graph. Then $\mathbb{K}[G]$ is $k$-indicated colorable for all $k\geq\chi(\mathbb{K}[G])$.
\end{thm}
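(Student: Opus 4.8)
The plan is to recognise $\mathbb{K}[G]$ as the complete expansion of an independent expansion of a complete graph, so that Theorem \ref{comexpindexpg} applies directly. The starting point is a structural observation: if $G$ is complete multipartite with parts $A_1,A_2,\ldots,A_r$, then each part is independent while any two vertices lying in different parts are adjacent, so $G$ is exactly the independent expansion of the complete graph $K_r$, namely $G\cong\mathbb{I}[K_r](|A_1|,|A_2|,\ldots,|A_r|)$. Consequently $\mathbb{K}[G]\cong\mathbb{K}[\mathbb{I}[K_r]]$, where the complete-expansion multiplicities (the per-vertex clique sizes) are inherited from those defining $\mathbb{K}[G]$.

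First I would verify that the hypotheses of Theorem \ref{comexpindexpg} hold with base graph $K_r$. The complete expansion of a complete graph is again a complete graph: since all vertices of $K_r$ are pairwise adjacent, replacing each by a clique and joining the cliques completely produces $K_N$ on $N=\sum_i M_i$ vertices, where $M_i$ denotes the size of the largest clique used inside the $i$-th part. A complete graph is chordal, so by Theorem \ref{bipart} (equivalently by Theorem \ref{pan}, since $\mathrm{col}(K_N)=\chi_i(K_N)=N$) it is $k$-indicated colorable for every $k\geq\chi(K_N)=N$.

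Next I would check that the chromatic numbers agree, so that the admissible range of $k$ is the desired one. As recorded in the proof of Theorem \ref{comexpindexpg}, selecting a largest clique in each part yields a complete-expansion subgraph $G'$ of the base graph with $\chi(\mathbb{K}[\mathbb{I}[K_r]])=\chi(G')$; in our situation $G'=\mathbb{K}[K_r]=K_N$ with $N=\sum_i M_i$, whence $\chi(\mathbb{K}[G])=\chi(\mathbb{K}[K_r])=N$. Therefore $\mathbb{K}[K_r]$ is $k$-indicated colorable for every $k\geq\chi(\mathbb{K}[G])$, and Theorem \ref{comexpindexpg} transfers this property to $\mathbb{K}[\mathbb{I}[K_r]]=\mathbb{K}[G]$, which is precisely the assertion.

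I do not anticipate a serious obstacle, since the argument is essentially a reduction through the identity $G\cong\mathbb{I}[K_r]$. The only point requiring care is the bookkeeping of multiplicities: confirming that performing the independent expansion of $K_r$ and then the prescribed complete expansion genuinely reproduces $\mathbb{K}[G]$ with the same per-vertex clique sizes, and that the quantity $\sum_i M_i$ computed for $\chi(\mathbb{K}[G])$ coincides with the one supplied by Theorem \ref{comexpindexpg}. Once this correspondence is pinned down, the $k$-indicated colorability of the complete graph $\mathbb{K}[K_r]$ for all $k$ at least its chromatic number immediately delivers the result for $\mathbb{K}[G]$.
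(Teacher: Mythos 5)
Your proposal is correct and follows essentially the same route as the paper's own proof: the paper also writes $\mathbb{K}[G]\cong\mathbb{K}[\mathbb{I}[K_s]]$, notes that $\mathbb{K}[K_s]$ is a complete graph, and invokes Theorem \ref{comexpindexpg}. Your additional bookkeeping (identifying the subgraph $G'$ of Theorem \ref{comexpindexpg} as $K_N$ with $N=\sum_i M_i$ and checking $\chi(\mathbb{K}[G])=\chi(G')=N$) just makes explicit what the paper leaves implicit, so no gap remains.
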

\begin{proof}
For a complete multipartite graph $G$, $\mathbb{K}[G]\cong\mathbb{K}[\mathbb{I}[K_s]]$, for some $s\geq 1$. Also $\mathbb{K}[K_s]$ is isomorphic to a complete graph and hence by using Theorem \ref{comexpindexpg}, we see that  $\mathbb{K}[G]$ is $k$-indicated colorable for all $k\geq\chi(\mathbb{K}[G])$.
\end{proof}
By using Theorem \ref{HwithKl} and Corollary \ref{comexpcommul}, we have Corollary \ref{comexpcommulproduct}.
\begin{cor}\label{comexpcommulproduct}
  Let $G$ be a complete multipartite graph and $H\in\mathcal{H}$ with $\chi_i(H)=\chi(H)$. Then $G[H]$ is $k$-indicated colorable for all $k\geq\chi(G[H])$.
\end{cor}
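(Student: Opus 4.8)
The plan is to chain together Theorem \ref{comexpcommul} and Theorem \ref{HwithKl}, exactly following the template used in the proofs of the preceding corollaries. First I would set $\ell=\chi(H)=\chi_i(H)$ and recall from the definition of the lexicographic product that $G[K_\ell]$ is obtained from $G$ by replacing each vertex with a copy of $K_\ell$ and joining all vertices across blocks corresponding to adjacent vertices of $G$; that is, $G[K_\ell]\cong\mathbb{K}[G](\ell,\ell,\ldots,\ell)$ is a complete expansion of the complete multipartite graph $G$. This identification is precisely what lets me bring Theorem \ref{comexpcommul}, which is phrased for complete expansions, to bear on the lexicographic power $G[K_\ell]$.

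Since $G$ is complete multipartite, Theorem \ref{comexpcommul} applies to this complete expansion and gives that $G[K_\ell]$ is $k$-indicated colorable for all $k\geq\chi(G[K_\ell])$. Because $H\in\mathcal{H}$ with $\chi(H)=\chi_i(H)=\ell$, Theorem \ref{HwithKl} states that $G[H]$ is $k$-indicated colorable if and only if $G[K_\ell]$ is $k$-indicated colorable. Combining these two facts immediately yields that $G[H]$ is $k$-indicated colorable for all $k\geq\chi(G[K_\ell])$.

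The one remaining point is to re-express this threshold in terms of $\chi(G[H])$ rather than $\chi(G[K_\ell])$. For this I would invoke the result of Geller and Stahl (\cite{gell}), which asserts that whenever $\chi(H)=\ell$ one has $\chi(G[H])=\chi(G[K_\ell])$. Substituting this equality into the bound obtained above gives that $G[H]$ is $k$-indicated colorable for all $k\geq\chi(G[H])$, as required.

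I do not expect a genuine obstacle: the corollary is a formal consequence of the two quoted theorems together with the Geller--Stahl equality of chromatic numbers, and it parallels word-for-word the derivations of Corollary \ref{chorp4product} and Corollary \ref{comexpp5k3product}. The only steps demanding a moment's care are the isomorphism $G[K_\ell]\cong\mathbb{K}[G](\ell,\ldots,\ell)$, needed so that Theorem \ref{comexpcommul} can be invoked, and the silent use of $\chi(G[H])=\chi(G[K_\ell])$ to align the indicated-coloring threshold with the chromatic number of $G[H]$.
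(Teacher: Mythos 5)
Your proposal is correct and follows exactly the paper's own derivation: the paper obtains this corollary by combining Theorem \ref{HwithKl} with Theorem \ref{comexpcommul}, using the identification $G[K_\ell]\cong\mathbb{K}[G](\ell,\ldots,\ell)$. Your explicit invocation of the Geller--Stahl equality $\chi(G[H])=\chi(G[K_\ell])$ fills in a step the paper leaves implicit (it is stated in the introduction), so nothing is missing.
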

Corollary \ref{comexpp5paw} is an immediate consequence of Corollary \ref{comexpp5k3}, Theorem \ref{paw} and Theorem \ref{comexpcommul}.
\begin{cor}\label{comexpp5paw}
Let $G$ be a $\{P_5,Paw\}$-free graph and let $H\cong \mathbb{K}[G](m,m,\ldots,m)$. Then $\chi_i(H)=\chi(H)$.
\end{cor}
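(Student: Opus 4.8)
The plan is to decompose a $\{P_5,Paw\}$-free graph into its connected components, dispatch each component to one of the two already-established cases, and then reassemble the pieces as a disjoint union. Since Theorem \ref{paw} is stated for connected graphs, the decomposition step is what lets the structural dichotomy be applied component-by-component.

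First I would record the distributive behaviour of the complete expansion over disjoint unions. If $G=G_1\cup G_2$, then no edge of $G$ joins $G_1$ to $G_2$, and distinct vertices lying in copies over different components are never adjacent in $\mathbb{K}[G](m,\ldots,m)$; hence $\mathbb{K}[G](m,\ldots,m)\cong\mathbb{K}[G_1](m,\ldots,m)\cup\mathbb{K}[G_2](m,\ldots,m)$. Consequently it suffices to analyse the complete expansion of each connected component of $G$ in isolation and then glue.

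Next I would invoke Theorem \ref{paw}: each connected component $C$ of the paw-free graph $G$ is either $K_3$-free or complete multipartite. Because $P_5$-freeness is inherited by induced subgraphs, every component is itself $P_5$-free, so a $K_3$-free component is in fact $\{P_5,K_3\}$-free; for such a component Corollary \ref{comexpp5k3} gives $\chi_i(\mathbb{K}[C](m,\ldots,m))=\chi(\mathbb{K}[C](m,\ldots,m))$, and its proof (routed through Theorem \ref{union}) in fact delivers the stronger conclusion that $\mathbb{K}[C](m,\ldots,m)$ is $k$-indicated colorable for every $k\geq\chi$. For a complete multipartite component, Theorem \ref{comexpcommul} directly states that $\mathbb{K}[C](m,\ldots,m)=\mathbb{K}[C]$ is $k$-indicated colorable for all $k\geq\chi(\mathbb{K}[C])$, and taking $k=\chi$ forces $\chi_i=\chi$ there as well.

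Finally I would reassemble the components by applying Theorem \ref{union} inductively over the (finitely many) components: since each $\mathbb{K}[C_i](m,\ldots,m)$ is $k$-indicated colorable for every $k\geq\chi_i$ and satisfies $\chi_i=\chi$, the disjoint union $\mathbb{K}[G](m,\ldots,m)$ is $k$-indicated colorable for all $k\geq\chi$ and has indicated chromatic number $\max_i\chi(\mathbb{K}[C_i])=\chi(\mathbb{K}[G])$, giving $\chi_i(H)=\chi(H)$. The only points needing care are the distributive law for the complete expansion over disjoint unions and the observation that both component cases supply the stronger ``for all $k\geq\chi$'' colorability required to invoke Theorem \ref{union}; beyond these, each component case is an immediate citation, which is why the result is genuinely immediate from Corollary \ref{comexpp5k3}, Theorem \ref{paw}, and Theorem \ref{comexpcommul}.
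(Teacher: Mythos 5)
Your proposal is correct and follows essentially the same route as the paper, which simply declares the corollary an immediate consequence of Corollary \ref{comexpp5k3}, Theorem \ref{paw} and Theorem \ref{comexpcommul}; your write-up just makes explicit the implicit steps (component decomposition, the fact that complete expansion distributes over disjoint union, and the reassembly via Theorem \ref{union}) that the paper leaves to the reader. The one delicate point you flag --- that the component cases must supply the stronger ``$k$-indicated colorable for all $k\geq\chi$'' property needed by Theorem \ref{union}, not merely $\chi_i=\chi$ --- is handled the same way in the paper's own chain of citations, so your argument inherits exactly the paper's dependencies and nothing more.
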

By using Theorem \ref{HwithKl} and Corollary \ref{comexpp5paw}, we have Corollary \ref{comexpp5pawproduct}.
\begin{cor}\label{comexpp5pawproduct}
  Let $G$ be a $\{P_5,Paw\}$-free graph and $H\in\mathcal{H}$ with $\chi_i(H)=\chi(H)$, then $\chi_i(G[H])=\chi(G[H])$.
\end{cor}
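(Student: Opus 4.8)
The plan is to chain together Theorem \ref{HwithKl}, Corollary \ref{comexpp5paw}, and the classical chromatic-number identity for lexicographic products recalled in Section \ref{intro}. Set $\ell=\chi(H)=\chi_i(H)$, which exists by the hypothesis that $H\in\mathcal{H}$ satisfies $\chi_i(H)=\chi(H)$. Since $H\in\mathcal{H}$ with $\chi(H)=\chi_i(H)=\ell$, the hypotheses of Theorem \ref{HwithKl} are met, so the first step is to record the identity $\chi_i(G[H])=\chi_i(G[K_\ell])$.

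Next I would observe that $G[K_\ell]\cong\mathbb{K}[G](\ell,\ell,\ldots,\ell)$, as noted in Section \ref{intro}, so that $G[K_\ell]$ is precisely a complete expansion of the $\{P_5,Paw\}$-free graph $G$. Corollary \ref{comexpp5paw} then applies and yields $\chi_i(G[K_\ell])=\chi(G[K_\ell])$.

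Finally, to convert the right-hand side back to $G[H]$, I would invoke the result of Geller and Stahl recalled in the introduction: since $\chi(H)=\ell$, we have $\chi(G[H])=\chi(G[K_\ell])$. Combining the three equalities gives
\[
\chi_i(G[H])=\chi_i(G[K_\ell])=\chi(G[K_\ell])=\chi(G[H]),
\]
which is the desired conclusion.

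Because every step is a direct application of an already-established result, I expect no genuine obstacle; the only point requiring care is bookkeeping with the two distinct roles played by the symbol for the second factor (the $H$ of the statement, versus the graph $\mathbb{K}[G](m,\ldots,m)$ called $H$ in the statement of Corollary \ref{comexpp5paw}). Writing the complete-expansion factor consistently as $G[K_\ell]$ throughout avoids this clash and makes clear that Corollary \ref{comexpp5paw} is applied to $G[K_\ell]$ rather than to $H$ itself.
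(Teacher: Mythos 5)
Your proposal is correct and follows essentially the same route as the paper, which derives this corollary in one line from Theorem \ref{HwithKl} and Corollary \ref{comexpp5paw}. Your explicit invocation of the Geller--Stahl identity $\chi(G[H])=\chi(G[K_\ell])$ to close the chain is a step the paper leaves implicit, and your remark about applying Corollary \ref{comexpp5paw} to $G[K_\ell]\cong\mathbb{K}[G](\ell,\ldots,\ell)$ rather than to $H$ is exactly the right bookkeeping.
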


\section{Conclusion}
On the whole, Section \ref{conseq} tells us that if $G\in\mathcal{G}=\Big\{$Chordal graphs, Cographs, Complement of bipartite graphs, $\{P_5,C_4\}$-free graphs, connected $\{P_6,C_5,\overline{P_5}, K_{1,3}\}$-free graphs which contain an induced $C_6$, Complete multipartite graphs$\Big\}$ and $H\in\mathcal{H}$ with $\chi_i(H)=\chi(H)$, then $G[H]$ is $k$-indicated colorable for every $k\geq \chi(G[H])$.
By using Theorem \ref{fch}, $\mathcal{F\subseteq H}$ and $\chi_i(H)=\chi(H)$, for every $H\in\mathcal{F}$. Hence  if $G\in\mathcal{G}$ and $H\in\mathcal{F}$,
 then $G[H]$ is $k$-indicated colorable for every $k\geq \chi(G[H])$. 
Also we have shown that  if $G$ is a Bipartite graph or $\{P_5,K_3\}$-free graph (or) $\{P_5,Paw\}$-free graph and $H\in\mathcal{F}$, then  $\chi_i(G[H])=\chi(G[H])$. 
\subsection*{Acknowledgment}
\small For the first author, this research was supported by Post Doctoral Fellowship, Indian Institute of Technology, Palakkad. And for the second author, this research was supported by SERB DST, Government of India, File no: EMR/2016/007339. Also, for the third author, this research was supported by the UGC-Basic Scientific Research, Government of India, Student id: gokulnath.res@pondiuni.edu.in. 

\end{titlepage}
\end{document}